\def\th@plain{%
  \upshape 
}
\renewenvironment{proof}[1][\proofname]{\par
  \pushQED{\qed}%
  \normalfont \topsep6\p@\@plus6\p@\relax
  \trivlist
  \item[\hskip\labelsep
        \bfseries
    #1\@addpunct{.}]\ignorespaces
}{%
  \popQED\endtrivlist\@endpefalse
}
\newtheorem{theorem}{Theorem}[section]
\newtheorem{lemma}{Lemma}[section]
\newtheorem{corollary}[theorem]{Corollary}
\theoremstyle{definition}
\newtheorem{definition}{Definition}
\newtheorem{remark}{Remark}
\newtheorem{problem}{Problem}
\newcommand{\etal}{et~al.\ }
\newcommand{\ie}{i.e.,\ }
\def\mad(#1){\mathrm{mad}(#1)}
\def\int(#1){\mathrm{int}(#1)}
\def\ext(#1){\mathrm{ext}(#1)}
\def\Int(#1){\mathrm{Int}(#1)}
\def\Ext(#1){\mathrm{Ext}(#1)}
\begin{document}
\title{Planar graphs without normally adjacent short cycles}
\author{Fangyao Lu\textsuperscript{a}\quad Mengjiao Rao\textsuperscript{b}\quad Qianqian Wang\textsuperscript{a}\quad Tao Wang\textsuperscript{a,}\footnote{{\tt Corresponding author: wangtao@henu.edu.cn}}\\
{\small \textsuperscript{a}Center for Applied Mathematics, Henan University, }\\
{\small Kaifeng, 475004, P. R. China}\\
{\small \textsuperscript{b}Center for Discrete Mathematics, Fuzhou University, }\\
{\small Fujian, 350003, P. R. China}}
\date{}
\maketitle
\begin{abstract}
Let $\mathscr{G}$ be the class of plane graphs without triangles normally adjacent to $8^{-}$-cycles, without $4$-cycles normally adjacent to $6^{-}$-cycles, and without normally adjacent $5$-cycles. In this paper, it is shown that every graph in $\mathscr{G}$ is $3$-choosable. Instead of proving this result, we directly prove a stronger result in the form of ``weakly'' DP-$3$-coloring. The main theorem improves the results in [J. Combin. Theory Ser. B 129 (2018) 38--54; European J. Combin. 82 (2019) 102995]. Consequently, every planar graph without $4$-, $6$-, $8$-cycles is $3$-choosable, and every planar graph without $4$-, $5$-, $7$-, $8$-cycles is $3$-choosable. In the third section, using almost the same technique, we prove that the vertex set of every graph in $\mathscr{G}$ can be partitioned into an independent set and a set that induces a forest, which strengthens the result in [Discrete Appl. Math. 284 (2020) 626--630]. In the final section, tightness is discussed.

Keywords: DP-coloring; Near-bipartite; IF-coloring; List coloring
\end{abstract}

\section{Introduction}
\label{sec1}
All graphs in this paper are finite, undirected and simple. For a graph $G$, a \textbf{list-assignment} $L$ assigns to each vertex $v$ a set $L(v)$ of colors available at $v$. An \textbf{$L$-coloring} of $G$ is a proper coloring $\phi$ of $G$ such that $\phi(v) \in L(v)$ for all $v \in V(G)$. A \textbf{list $k$-assignment} $L$ is a list-assignment such that $|L(v)| \geq k$ for all $v \in V(G)$. A graph $G$ is \textbf{$k$-choosable} or \textbf{list $k$-colorable} if it has an $L$-coloring for any list $k$-assignment $L$. The \textbf{list chromatic number} or \textbf{choice number} $\chi_{\ell}(G)$ is the least integer $k$ such that $G$ is $k$-choosable. 

The Four Color Theorem states that every planar graph is $4$-colorable. Gr\"{o}tzsch \cite{MR0116320} showed that every planar graph without triangles is $3$-colorable. Much more sufficient conditions for $3$-colorability and $3$-choosability are extensively studied. Thomassen \cite{MR1328294} showed that every planar graph with girth at least five is $3$-choosable. Borodin \cite{MR3004485} conjectured that every planar graph without cycles of length $4$ to $8$ is $3$-choosable. 

A widely used technique in ordinary vertex coloring is the identification of vertices, but this is not feasible in general for list-coloring because different vertices may have different lists. To overcome this difficulty, Dvo\v{r}\'{a}k and Postle \cite{MR3758240} introduced DP-coloring, also called correspondence coloring, as a generalization of list-coloring. 

\begin{definition}\label{DEF1}
Let $G$ be a simple graph and $L$ be a list-assignment for $G$. For each vertex $v \in V(G)$, let $L_{v} = \{v\} \times L(v)$; for each edge $uv \in E(G)$, let $\mathscr{M}_{uv}$ be a matching between the sets $L_{u}$ and $L_{v}$, and let $\mathscr{M} := \bigcup_{uv \in E(G)}\mathscr{M}_{uv}$. We call $\mathscr{M}$ a \textbf{matching assignment}. The matching assignment is called a \textbf{$k$-matching assignment} if $L(v) = [k]$ for each $v \in V(G)$. A \textbf{cover} of $G$ is a graph $H_{L, \mathscr{M}}$ (simply write $H$) satisfying the following two conditions: 
\begin{enumerate}[label =(C\arabic*)]
\item the vertex set of $H$ is the disjoint union of $L_{v}$ for all $v \in V(G)$; 
\item the edge set of $H$ is the matching assignment $\mathscr{M}$.
\end{enumerate}
\end{definition}
Note that the matching $\mathscr{M}_{uv}$ is not required to be a perfect matching between the sets $L_{u}$ and $L_{v}$, and possibly it is empty. The induced subgraph $H[L_{v}]$ is an independent set for each vertex $v \in V(G)$. 

\begin{definition}
Let $G$ be a simple graph and $H$ be a cover of $G$. An \textbf{$\mathscr{M}$-coloring} of $G$ is an independent set $\mathcal{I}$ in $H$ such that $|\mathcal{I} \cap L_{v}| = 1$ for each vertex $v \in V(G)$. The graph $G$ is \textbf{DP-$k$-colorable} if for any list-assignment $L(v) \supseteq [k]$ and any matching assignment $\mathscr{M}$, it has an $\mathscr{M}$-coloring. The \textbf{DP-chromatic number $\chi_{\mathrm{DP}}(G)$} of $G$ is the least integer $k$ such that $G$ is DP-$k$-colorable. 
\end{definition}

DP-coloring is quite different from list-coloring, for example each even cycle is $2$-choosable but it is not DP-$2$-colorable. Dvo\v{r}\'{a}k and Postle gave a relation between DP-coloring and list-coloring. 

Let $W = w_{1}w_{2}\dots w_{m}$ with $w_{m} = w_{1}$ be a closed walk of length $m - 1$ in $G$, a matching assignment is \textbf{inconsistent} on $W$, if there exist $c_{1}, \dots, c_{m}$ such that $c_{i} \in L(w_{i})$ for $i \in [m]$ and $(w_{i}, c_{i})(w_{i+1}, c_{i+1})$ is an edge in $\mathscr{M}_{w_{i}w_{i+1}}$ for $i \in [m-1]$ and $c_{1} \neq c_{m}$. Otherwise, the matching assignment is \textbf{consistent} on $W$. We say that a matching assignment is \textbf{consistent} if it is consistent on every closed walk in $G$. 

\begin{theorem}[Dvo\v{r}\'{a}k and Postle \cite{MR3758240}]
A graph $G$ is $k$-choosable if and only if $G$ is $\mathscr{M}$-colorable for every consistent $k$-matching assignment $\mathscr{M}$. 
\end{theorem}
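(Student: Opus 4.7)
The plan is to prove the two directions separately; both proceed by an explicit translation between list assignments and matching assignments. For the ``if'' direction, given a list $k$-assignment $L'$, first shrink each $L'(v)$ to exactly $k$ elements and fix a bijection $f_v\colon L'(v)\to [k]$. Define the matching $\mathscr{M}_{uv}$ by joining $(u,i)$ to $(v,j)$ exactly when $f_u^{-1}(i)=f_v^{-1}(j)$, i.e., the two labels encode the same color of $L'$. Consistency on any closed walk is then immediate, since following a matching edge preserves the underlying $L'$-color, and a closed walk must return to the starting vertex whose bijection $f_v$ is injective. An $\mathscr{M}$-coloring $\mathcal{I}$ then hands back a proper $L'$-coloring via $\phi(v):=f_v^{-1}(c_v)$ where $(v,c_v)\in\mathcal{I}$; independence of $\mathcal{I}$ is exactly the requirement that adjacent vertices receive different $L'$-colors.

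For the ``only if'' direction, I would reverse this construction. Let $\mathscr{M}$ be a consistent $k$-matching assignment and consider the cover $H=H_{L,\mathscr{M}}$. The key claim is that each connected component of $H$ meets each fibre $L_v$ in at most one vertex: if two distinct $(v,c),(v,c')$ lay in the same component, any path between them in $H$ would project to a closed walk in $G$ starting and ending at $v$, and consistency on that walk would force $c=c'$. Given the claim, label the components of $H$ by $1,\ldots,s$ and set $L'(v):=\{\,j : C_j\cap L_v\ne\emptyset\,\}$; the claim guarantees $|L'(v)|=k$. Invoke $k$-choosability of $G$ to obtain a proper $L'$-coloring $\phi$, and define the putative $\mathscr{M}$-coloring $\mathcal{I}$ by picking in each $L_v$ the unique element lying in $C_{\phi(v)}$. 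Any matching edge between two chosen vertices would lie inside a single component, forcing $\phi(u)=\phi(v)$ along an edge $uv$ of $G$ and contradicting properness, so $\mathcal{I}$ is independent.

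The main obstacle is the connected-component claim in the second direction: without it the list $L'$ is not even well defined, and the reduction to $k$-choosability collapses. Consistency, stated on closed walks rather than only on cycles, is exactly the hypothesis that prevents two elements of the same fibre $L_v$ from being glued together through a walk in $H$; it plays no role in the ``if'' direction, which is why the same construction fails for arbitrary matching assignments and motivates the distinction between DP-coloring and list coloring. Once this structural fact is secured, both directions reduce to routine bookkeeping.
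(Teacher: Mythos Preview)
The paper does not supply its own proof of this theorem; the result is quoted from Dvo\v{r}\'{a}k and Postle without argument, so there is nothing in the paper to compare your proposal against. That said, your proof is correct and is essentially the standard one: for the ``if'' direction you encode a list $k$-assignment as a consistent $k$-matching assignment via bijections $f_v\colon L'(v)\to[k]$, and for the ``only if'' direction you use consistency on closed walks to show that each connected component of the cover $H$ meets every fibre $L_v$ in at most one vertex, after which the component labels furnish a list $k$-assignment to which $k$-choosability applies. Both translations are routine once the component claim is in hand, exactly as you note.
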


With the aid of DP-coloring, Dvo\v{r}\'{a}k and Postle \cite{MR3758240} solved the longstanding conjecture by Borodin. 
\begin{theorem}[Dvo\v{r}\'{a}k and Postle \cite{MR3758240}]\label{3C}
Every planar graph without cycles of length $4$ to $8$ is $3$-choosable. 
\end{theorem}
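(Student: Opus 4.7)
The plan is to strengthen the theorem into a form amenable to induction via the weakly DP-coloring framework. Concretely, I would prove that for a planar graph $G$ without cycles of length $4$ to $8$, a designated outer face $F_{0}$, a subset $S\subseteq V(F_{0})$ of a bounded number of precolored vertices, and a $3$-matching assignment $\mathscr{M}$ that is consistent on every cycle of $G$ except possibly those bounding $F_{0}$, every $\mathscr{M}$-coloring of $G[S]$ extends to an $\mathscr{M}$-coloring of $G$. Taking $S=\varnothing$ and invoking the Dvo\v{r}\'{a}k--Postle equivalence between consistent matching assignments and list assignments then yields \autoref{3C}.

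I would argue by induction on $|V(G)|+|E(G)|$ and suppose for contradiction that $G$ is a minimum counterexample. First, standard arguments force $G$ to be $2$-connected with no short separating cycle, because any cut or separating cycle splits $G$ into smaller instances of the strengthened statement that can be reassembled. Next, I would build a catalogue of reducible configurations: interior vertices of degree at most $2$, certain patterns of $3$-vertices, adjacent $3$-faces with controlled degree types, and so on. For each configuration the reduction follows the same template: delete or locally modify a small subgraph, apply the inductive hypothesis to the smaller graph with an appropriately enlarged $S$, and extend the coloring back to $G$ by choosing at each restored vertex a color avoiding the at most two forbidden values dictated by the matching. The final step is an Euler-formula discharging: assign each vertex $v$ the charge $d(v)-4$ and each face $f$ the charge $\ell(f)-4$, so that the total is $-8$; then redistribute charge so that the absence of $4$- through $8$-faces, combined with the reducibility statements, leaves every vertex and face with nonnegative charge, producing the contradiction.

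The principal obstacle is that reducibility must be verified for an arbitrary matching assignment rather than a list assignment. After deleting a small subgraph, the matching induced on the new boundary can twist in any combinatorially valid way, so a single local argument has to handle every possible cover; typically this requires the consistency hypothesis on interior cycles (to rule out pathological twistings) together with a pigeonhole or parity count over the colors used on a short boundary cycle. Coordinating this matching-level analysis with the precolored set $S$ near $F_{0}$ is where most of the casework concentrates, and it is precisely why the strengthened ``weak DP'' statement, rather than ordinary DP-coloring or list coloring, is the right vehicle for induction.
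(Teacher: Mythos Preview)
Your overall architecture---strengthen to a precoloured-boundary extension statement in the weak DP setting, take a minimal counterexample, establish $2$-connectivity and the absence of short separating cycles, then discharge---matches the Dvo\v{r}\'ak--Postle template that the paper invokes (\autoref{N4-8}) and generalises (\autoref{MR}). But the essential mechanism is absent from your plan. The decisive reducible configuration (\autoref{tetrad} here) is four consecutive internal $3$-vertices $w_{1},\dots,w_{4}$ on a $9^{+}$-face with $w_{1}w_{2}$ lying on a triangle $ww_{1}w_{2}$; the reduction deletes these four vertices and then \emph{identifies} the exposed neighbours $w_{0}$ and $w'$ before invoking induction. A distance bound shows no short cycle is created, and straightening the triangle edges via \autoref{ST} (this is where consistency on closed $3$-walks is actually used) forces $w_{0}$ and $w_{3}$ to receive distinct colours, after which $w_{4},w_{3},w_{2},w_{1}$ can be restored in order. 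Your reductions as described---delete, then extend back ``by choosing at each restored vertex a colour avoiding the at most two forbidden values''---are pure deletion, and that fails here: the last vertex $w_{1}$ has three already-coloured neighbours $w_{0},w_{2},w$. Vertex identification is exactly the move DP-colouring permits and list colouring does not, and it is the whole point of routing the proof of \autoref{3C} through correspondence colouring.

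Two smaller corrections. The discharging must treat the outer face specially: the paper gives it charge $\deg(D)+4$, absorbing the global $-8$ so that the total is zero and $D$ can send $\tfrac{4}{3}$ to each boundary $3^{-}$-vertex; with your uniform $\ell(f)-4$ the (up to twelve) $2$- and $3$-vertices on the precoloured cycle cannot be brought up to zero. And $S$ is never ``enlarged'' during a reduction: it is fixed throughout as the full outer cycle (or a single vertex), and all reductions take place strictly in the interior.
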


An edge $uv$ in $G$ is \textbf{straight} in a $k$-matching assignment $\mathscr{M}$ if $(u, c_{1})(v, c_{2}) \in \mathscr{M}_{uv}$ satisfies $c_{1} = c_{2}$. An edge $uv$ in $G$ is \textbf{full} in a $k$-matching assignment $\mathscr{M}$ if $\mathscr{M}_{uv}$ is a perfect matching. 
\begin{lemma}[Dvo\v{r}\'{a}k and Postle \cite{MR3758240}]\label{ST}
Let $G$ be a graph with a $k$-matching assignment $\mathscr{M}$, and let $K$ be a subgraph of $G$. If for every cycle $\mathcal{Q}$ in $K$, the assignment $\mathscr{M}$ is consistent on $\mathcal{Q}$ and all edges of $\mathcal{Q}$ are full, then we may rename $L(u)$ for $u \in V(K)$ to obtain a $k$-matching assignment $\mathscr{M}'$ for $G$ such that all edges of $K$ are straight in $\mathscr{M}'$. 
\end{lemma}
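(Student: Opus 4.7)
The plan is to produce the desired renaming by choosing a spanning forest of $K$, processing it from the roots outward to make every tree edge straight, and then observing that each remaining edge of $K$ is forced to be straight by the consistency and fullness hypotheses.

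First I would pick, for each component of $K$, a spanning tree rooted at an arbitrary vertex; let $F$ be the resulting spanning forest. Leave $L(r)$ unchanged for every root $r$. Then process the tree edges in a BFS order: when a tree edge $uv$ is processed, $L(u)$ has already been finalized while $L(v)$ has not. The matching $\mathscr{M}_{uv}$ restricts to a partial bijection $\sigma\colon A_u \to A_v$ between subsets $A_u, A_v \subseteq [k]$; since $|[k]\setminus A_u| = |[k]\setminus A_v|$, I can extend $\sigma$ to a full permutation $\pi_v\colon [k]\to[k]$ and rename $L(v)$ via $\pi_v$. After this rename, every matching edge in the updated assignment $\mathscr{M}'_{uv}$ has the form $(u,c)(v,c)$, so the tree edge $uv$ is straight. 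Iterating along $F$ makes every edge of $F$ straight.

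It remains to check that each non-tree edge $uv \in E(K)\setminus E(F)$ is also straight in $\mathscr{M}'$. Such an edge closes, together with the $u$-to-$v$ tree path in $F$, a unique cycle $\mathcal{Q} \subseteq K$; the hypothesis then says $\mathscr{M}$ is consistent on $\mathcal{Q}$ and every edge of $\mathcal{Q}$ is full. Both properties are unchanged by the renaming, since renaming is merely a relabeling. Fix any $c \in [k]$. Because the tree edges on the $u$-to-$v$ path are straight in $\mathscr{M}'$, starting from $(u,c)$ and following the matching along them yields the vertex $(v,c)$. By fullness of $\mathscr{M}'_{uv}$, there is a unique $c' \in [k]$ with $(v,c)(u,c') \in \mathscr{M}'_{uv}$; this completes a traversal of $\mathcal{Q}$, and consistency forces $c' = c$. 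Hence $\mathscr{M}'_{uv}$ consists entirely of straight edges, so $uv$ is straight.

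The main obstacle is this last step: a priori, the arbitrary extensions $\pi_v$ chosen along the spanning tree could damage chord straightness. The key observation is that any tree edge lying on a cycle in $K$ is full by hypothesis, so its partial bijection $\sigma$ is already a full bijection and the extension is forced; only bridges of $K$ enjoy any real freedom, and bridges lie on no cycle and therefore never enter the chord-consistency argument. Once this is recognized, the rest of the proof is routine bookkeeping about partial matchings.
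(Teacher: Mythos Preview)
The paper does not prove this lemma; it is quoted from Dvo\v{r}\'{a}k and Postle \cite{MR3758240} as a black-box tool and used later (in \autoref{full} and \autoref{tetrad}) without further justification. So there is no in-paper argument to compare yours against.

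That said, your proof is correct and is essentially the standard one: straighten a spanning forest of $K$ by propagating renamings outward from the roots, then use fullness and consistency around each fundamental cycle to force every chord edge straight. Your final paragraph pinpoints the only delicate spot---that the arbitrary extensions $\pi_v$ might seem to interfere with chord straightness---and resolves it correctly: any tree edge lying on a cycle of $K$ is full by hypothesis, so its $\sigma$ is already a bijection and the ``extension'' is uniquely determined; the only freedom occurs at bridges, which never participate in the cycle argument. One small point you use implicitly and might state explicitly: consistency and fullness are preserved under renaming because $w_1 = w_m$ forces the same permutation to be applied to $c_1$ and $c_m$, so $c_1 = c_m$ survives the relabeling.
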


In order to prove \autoref{3C}, they showed a stronger result as the following. 
\begin{theorem}[Dvo\v{r}\'{a}k and Postle \cite{MR3758240}]\label{N4-8}
Let $G$ be a plane graph without cycles of length $4$ to $8$. Let $S$ be a set of vertices of $G$ such that $|S| \leq 1$ or $S$ consists of all vertices on a face of $G$. Let $\mathscr{M}$ be a $3$-matching assignment for $G$ such that $\mathscr{M}$ is consistent on every closed walk of length three in $G$. If $|S| \leq 12$, then every $\mathscr{M}$-coloring $\phi$ of $G[S]$ can be extended to an $\mathscr{M}$-coloring $\varphi$ of $G$. 
\end{theorem}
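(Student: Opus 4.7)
The plan is to proceed by contradiction, letting $(G,S,\phi,\mathscr{M})$ be a counterexample minimizing $|V(G)|+|E(G)|$, with $G$ drawn so that the vertices of $S$ lie on the outer face, whose boundary walk I call $C$. First I would exploit the minimality, together with \autoref{ST} to straighten edges/cycles on which $\mathscr{M}$ is consistent, to rule out standard reducible configurations: $G$ is connected and $2$-connected, $G[S]$ is the subgraph induced by $C$, every vertex outside $S$ has degree at least $3$, every chord of $C$ would separate $G$ into pieces each smaller than $G$ and satisfying the hypotheses (forbidden by minimality), and any separating $k$-cycle with small interior leads to a contradiction by coloring outside, straightening the separating cycle, and recursing inside. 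Since $G$ has no cycle of length $4$ to $8$, every internal face has length $3$ or at least $9$, and by the structural reductions any two triangular faces share at most a vertex, and no triangle shares an edge with a $9$-face unless carefully controlled.

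Next I would run a discharging argument on the resulting structural restrictions. Using Euler's formula
\[
\sum_{v\in V(G)}(d(v)-4)+\sum_{f\in F(G)}(\ell(f)-4)=-8,
\]
assign initial charge $\mu(v)=d(v)-4$ to each vertex and $\mu(f)=\ell(f)-4$ to each face. The outer face $C_{0}$ absorbs the global deficit of $-8$, so I target rules that keep every other object at non-negative final charge while ensuring $C_{0}$ ends with at most $\ell(C_{0})-4-8=|S|-12$; this is precisely why the bound $|S|\le 12$ appears. Natural rules transfer charge from vertices of degree at least $4$ and from faces of length at least $9$ to triangular faces, and send a controlled amount from interior structure toward $C_{0}$ to pay for precolored boundary vertices of small degree. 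The key combinatorial estimate is that a $9^{+}$-face cannot be adjacent to too many triangles, because a shared edge with a triangle shortens the enclosing cycle, and any such configuration generates a forbidden cycle of length $4$ to $8$.

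The hard part will be the interface between the precolored cycle $C$ and the rest of $G$. Because $C$ is a cycle of $G$ and must have length $3$ or at least $9$, the admissible values of $|S|$ collapse to $|S|\in\{0,1,2,3\}\cup\{9,10,11,12\}$; the cases $|S|\le 2$ reduce by peeling off one precolored vertex and applying induction, while the cases $|S|\ge 9$ require showing that vertices on $C$ with few interior neighbors either let the coloring be extended directly or force a reducible configuration (e.g.\ a triangle hanging off $C$, or two consecutive degree-$3$ interior neighbors with straightened matching on a consistent cycle, which allows a greedy extension via \autoref{ST}). Once every interior vertex and face has non-negative final charge, summing gives total charge at least $|S|-12\ge -12$, but Euler forces total charge exactly $-8$; the gap is absorbed by the contribution from $C_{0}$, and any excess translates into a reducible configuration the minimality forbids. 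The main obstacle is choosing the discharging rules so that small faces near $C$ (especially triangles with a vertex on $C$) simultaneously receive enough from the interior and do not extract too much from $C_{0}$; I would expect the verification to split into a short case analysis over the local configurations around precolored vertices of degree $2$ and $3$ on $C$, each case closed either by direct discharging or by a constructive extension via the straightening guaranteed by \autoref{ST}.
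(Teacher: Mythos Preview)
Your outline has the correct overall architecture (minimal counterexample, structural reductions, discharging with initial charges $d(v)-4$ and $\ell(f)-4$), and it matches the paper's approach at that level. But the heart of the argument is missing, and one of your key claims is wrong.

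The claim that ``a $9^{+}$-face cannot be adjacent to too many triangles, because a shared edge with a triangle shortens the enclosing cycle, and any such configuration generates a forbidden cycle of length $4$ to $8$'' is false. A $9$-face can have triangles on four pairwise disjoint edges without producing any $4$--$8$ cycle; nothing in the hypothesis prevents this. So the discharging does \emph{not} close on purely structural grounds, and you need a genuine reducibility lemma to bound how many ``bad'' internal $3$-vertices (those lying in a triangle) can sit consecutively on a $9^{+}$-face.

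That lemma is the paper's \autoref{tetrad}, and its proof is the step you never mention: \emph{vertex identification}. Given four consecutive internal $3$-vertices $w_{1},w_{2},w_{3},w_{4}$ on a face with $w_{1}w_{2}$ in a triangle, one deletes them, identifies the two outer neighbours $w_{0}$ and $w'$ (possible because the absence of short cycles and short separating cycles forces their distance to be large), colours the smaller graph by minimality, and then extends greedily to $w_{4},w_{3},w_{2},w_{1}$ using the fact that the straightened matchings make $w_{0}$ and $w_{3}$ receive distinct colours. This identification trick is precisely why DP-coloring is used instead of list coloring, and it is the engine that makes the discharging balance. Your references to \autoref{ST} and ``greedy extension'' gesture toward the extension step but omit the identification, without which the $9$-face case of the discharging (the only nontrivial case) cannot be finished.

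Two smaller points: the paper's minimality is layered (first $|V(G)|$, then $|E(G)|-|S|$, then maximizing $|\mathscr{M}|$), and the third layer is needed to prove that triangle edges are full (\autoref{full}), which in turn is needed to straighten them for the identification argument. And the reduction of $|S|\le 1$ is not by ``peeling off'' a vertex but by growing $S$ to a short cycle (\autoref{LT}\ref{Lf}).
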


Two cycles are \textbf{adjacent} if they have at least one common edge. An $\ell_{1}$-cycle and an $\ell_{2}$-cycle are \textbf{normally adjacent} if they form an $(\ell_{1} + \ell_{2} - 2)$-cycle with exactly one chord. In other words, two cycles are normally adjacent if their intersection is $K_{2}$. Recently, Liu and Li \cite{MR3983123} improved \autoref{N4-8} to the following result by allowing cycles of length $4$ to $8$ but forbidding adjacent cycles of length at most $8$. 

\begin{theorem}[Liu and Li \cite{MR3983123}]\label{NON}
Let $G$ be a plane graph without adjacent cycles of length at most $8$. Let $S$ be a set of vertices of $G$ such that $|S| \leq 1$ or $S$ consists of all vertices on a face of $G$. Let $\mathscr{M}$ be a $3$-matching assignment for $G$ such that $\mathscr{M}$ is consistent on every closed walk of length three in $G$. If $|S| \leq 12$, then every $\mathscr{M}$-coloring $\phi$ of $G[S]$ can be extended to an $\mathscr{M}$-coloring $\varphi$ of $G$. 
\end{theorem}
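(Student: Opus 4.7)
The plan is to prove \autoref{NON} by a minimum-counterexample argument combined with discharging, in the spirit of the approach Dvo\v{r}\'{a}k and Postle used for \autoref{N4-8}. Choose a counterexample $(G,S,\mathscr{M},\phi)$ minimizing $|V(G)|+|E(G)|$, and embed $G$ in the plane so that, whenever $|S|>1$, the vertices of $S$ lie on the boundary of the outer face $f_{0}$. The objective is to extract enough forbidden substructures that $G$ cannot satisfy Euler's formula.

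The first phase is structural. Standard arguments from minimality should give that $G$ is $2$-connected, every vertex not in $S$ has degree at least $3$, no short cycle separates $G$ (otherwise split along it and treat each side), and $f_{0}$ is the only face incident to $S$. The hypothesis that no two cycles of length at most $8$ share an edge is used in two essential ways. First, every edge on the boundary of a short face has a face of length at least $9$ on its opposite side, which provides a uniform "reservoir" for discharging. Second, together with the consistency of $\mathscr{M}$ on closed walks of length three and \autoref{ST}, every short cycle can be renamed so that all of its edges become straight; on such a cycle, DP-coloring collapses to ordinary $3$-list-coloring, so familiar reducible configurations (long paths of degree-$3$ vertices, $(3,3,3)$-faces, adjacent triangles with an interior degree-$3$ vertex, and analogous near-boundary variants) can be imported, each one verified by deleting or contracting a small piece and invoking minimality on the smaller instance.

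For the discharging phase I would assign $\mu(v)=d(v)-4$ for each vertex and $\mu(f)=\ell(f)-4$ for each interior face, with a corrected charge on $f_{0}$ so that $\sum_{x}\mu(x)=-8$ by Euler. The principal rule is that each face of length at least $9$ donates a prescribed amount of charge across each of its edges to the short face and low-degree vertices on the other side; the no-adjacent-short-cycles condition guarantees that such a long neighbor is always present along any edge of a short face, so donations are always available. A case analysis on $d(v)$ and $\ell(f)$, driven by the reducibility lemmas of the first phase, should then show that every vertex and face ends with nonnegative charge, contradicting $\sum\mu=-8$.

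The hardest step, I expect, is the treatment of the precolored face. Because the colors on $S$ are frozen, the usual moves of deleting or identifying a vertex adjacent to $f_{0}$ are no longer free, so reducibility claims need to be weakened near the boundary and the slack absorbed into the discharging. Calibrating the initial charge of $f_{0}$ and the boundary donation rules so that short faces abutting $f_{0}$ do not exhaust the $\ell(f_{0})-4$ budget, while keeping the interior rules uniform enough for the degree/face case analysis to go through, is where I expect most of the technical effort to concentrate.
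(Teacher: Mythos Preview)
First, note that the paper does not itself prove \autoref{NON}; it is quoted from Liu and Li \cite{MR3983123}. What the paper proves is the stronger \autoref{MR}, so the relevant comparison is with that argument.

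Your outline has the right skeleton --- minimal counterexample, structural lemmas, discharging with $d(v)-4$ and $\ell(f)-4$ --- but it is missing the one reducible configuration that makes the discharging close. None of the configurations you list actually applies here: in a graph with no adjacent $8^{-}$-cycles there are no adjacent triangles at all, and a lone $(3,3,3)$-triangle is not reducible in this DP setting. The workhorse is the \emph{tetrad} (\autoref{tetrad}): four consecutive internal $3$-vertices $w_{1}w_{2}w_{3}w_{4}$ on a $5^{+}$-face with $w_{1}w_{2}$ lying on a triangle. Its reduction proceeds by \emph{identifying} $w_{0}$ with the third neighbour $w'$ of $w_{3}$ --- precisely the move DP-coloring permits and list-coloring does not. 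For that identification to be legal the triangle edges must first be straight and full, which in turn forces a finer extremal choice than your $|V|+|E|$: the paper minimizes $|V(G)|$, then $|E(G)|-|S|$, and then \emph{maximizes} the number of edges in $\mathscr{M}$ (condition~\eqref{EQ3}); this last clause is what drives \autoref{full}. Without the tetrad one cannot forbid five consecutive bad $3$-vertices on a $9$-face (\autoref{BAD}), and the $9$-face case of the discharging cannot be closed.

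Two smaller points. The consistency hypothesis is only on closed walks of length three, so only triangles straighten automatically; longer short cycles do not, which is why the fullness argument above matters and why ``DP collapses to list on every short cycle'' overstates what you have. And the paper loads the outer face with $\deg(D)+4$ so that the total charge is $0$ rather than $-8$; the boundary endgame then reduces to $|D|=12$ with every boundary vertex of degree at most $3$, where the sole surviving graph is that of \autoref{abnormal} --- excluded in \autoref{MR} by the normal-cycle hypothesis on $S$, and in \autoref{NON} because it contains adjacent $6$-cycles.
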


This implies the $3$-choosability of planar graphs without adjacent cycles of length at most $8$. 
\begin{theorem}[Liu and Li \cite{MR3983123}]\label{adj8-}
Every planar graph without adjacent cycles of length at most $8$ is $3$-choosable. 
\end{theorem}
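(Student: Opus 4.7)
The plan is to deduce \autoref{adj8-} directly from \autoref{NON} via the Dvo\v{r}\'{a}k--Postle characterization of choosability. I would fix a planar embedding of $G$, take an arbitrary $3$-list-assignment $L$, and invoke the Dvo\v{r}\'{a}k--Postle equivalence (the unlabeled theorem stated just above \autoref{3C}) to reduce the task to showing that $G$ is $\mathscr{M}$-colorable for every consistent $3$-matching assignment $\mathscr{M}$ compatible with $L$. So the remaining goal is to produce such an $\mathscr{M}$-coloring for a fixed but arbitrary consistent $\mathscr{M}$.

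The next step is to check that \autoref{NON} applies. Since $\mathscr{M}$ is consistent on \emph{every} closed walk of $G$, it is in particular consistent on every closed walk of length three, which is the (weaker) hypothesis required by \autoref{NON}. The graph $G$ is a plane graph with no adjacent cycles of length at most $8$ by assumption, so the structural hypothesis is met as well. I would then take $S = \emptyset$: the conditions $|S| \le 1$ and $|S| \le 12$ are both trivially satisfied, and the (vacuously defined) $\mathscr{M}$-coloring of $G[\emptyset]$ extends, by \autoref{NON}, to an $\mathscr{M}$-coloring of all of $G$. Translating this $\mathscr{M}$-coloring back via the correspondence yields the desired $L$-coloring.

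There is essentially no obstacle in this final derivation: all of the substantive work is carried out in \autoref{NON}, and \autoref{adj8-} is the formal corollary obtained by plugging in the trivial boundary datum $S = \emptyset$ alongside the Dvo\v{r}\'{a}k--Postle equivalence. The one conceptual point worth highlighting is that \autoref{NON} is a ``weakly'' DP-coloring statement, requiring consistency only on closed walks of length three; this lies strictly between ordinary $3$-choosability and full DP-$3$-colorability, so the equivalence is invoked in exactly the direction that is needed, since any matching assignment consistent on \emph{all} closed walks is a fortiori consistent on the length-three closed walks.
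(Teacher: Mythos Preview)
Your proposal is correct and follows exactly the approach the paper intends: \autoref{adj8-} is stated as an immediate consequence of \autoref{NON}, obtained by combining it with the Dvo\v{r}\'{a}k--Postle equivalence and a trivial choice of $S$. The only minor inaccuracy is the phrase ``compatible with $L$''---in the equivalence theorem the $3$-matching assignment uses $L(v)=[3]$ for all $v$, not the original lists---but this does not affect the argument.
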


The first goal of this paper is to further improve \autoref{NON} to the following result by allowing adjacent cycles of length $6$ to $8$ and changing the condition on precolored vertices from faces to cycles. But before we state the main theorem, it's necessary to give a new concept. A cycle is \textbf{abnormal} if it is the $11$- or $12$-cycle in a subgraph isomorphic to a configuration in \autoref{abnormal}. A cycle is \textbf{normal} if it is not an abnormal cycle. A $d$-vertex, $d^{+}$-vertex or $d^{-}$-vertex is a vertex of degree $d$, at least $d$, or at most $d$ respectively. Similar definitions can be applied to faces and cycles. 

\begin{figure}%
\centering
\def\s{1}
\subcaptionbox{\label{abnormala}}[0.2\linewidth]{\begin{tikzpicture}
\draw [line width = 1.5pt, blue] (0:\s)--(360/11:\s)--(360*2/11:\s)--(360*3/11:\s)--(360*4/11:\s)--(360*5/11:\s)--(360*6/11:\s)--(360*7/11:\s)--(360*8/11:\s)--(360*9/11:\s)--(360*10/11:\s)--cycle;
\foreach \ang in {0, 360/11, 360*2/11, 360*3/11, 360*4/11, 360*5/11, 360*6/11, 360*7/11, 360*8/11, 360*9/11, 360*10/11}{\fill 
(\ang:\s) circle (2pt);}
\draw (0, 0)--(0:\s);
\draw (0, 0)--(360*3/11:\s);
\draw (0, 0)--(360*7/11:\s);
\fill (0, 0) circle (2pt);
\node at (360*1.5/11:0.5*\s) {$5$};
\node at (360*5/11:0.5*\s) {$6$};
\node at (360*9/11:0.5*\s) {$6$};
\end{tikzpicture}}
\subcaptionbox{\label{abnormalb}}[0.2\linewidth]{\begin{tikzpicture}
\draw [line width = 1.5pt, blue] (0:\s)--(30:\s)--(60:\s)--(90:\s)--(120:\s)--(150:\s)--(180:\s)--(210:\s)--(240:\s)--(270:\s)--(300:\s)--(330:\s)--cycle;
\foreach \ang in {0, 30, 60, 90, 120, 150, 180, 210, 240, 270, 300, 330}{\fill 
(\ang:\s) circle (2pt);}
\draw (0, 0)--(0:\s);
\draw (0, 0)--(90:\s);
\draw (0, 0)--(210:\s);
\fill (0, 0) circle (2pt);
\node at (45:0.5*\s) {$5$};
\node at (150:0.5*\s) {$6$};
\node at (30*9.5:0.5*\s) {$7$};
\end{tikzpicture}}
\subcaptionbox{\label{abnormalc}}[0.2\linewidth]{\begin{tikzpicture}
\draw [line width = 1.5pt, blue] (0:\s)--(30:\s)--(60:\s)--(90:\s)--(120:\s)--(150:\s)--(180:\s)--(210:\s)--(240:\s)--(270:\s)--(300:\s)--(330:\s)--cycle;
\foreach \ang in {0, 30, 60, 90, 120, 150, 180, 210, 240, 270, 300, 330}{\fill 
(\ang:\s) circle (2pt);}
\draw (0, 0)--(0:\s);
\draw (0, 0)--(120:\s);
\draw (0, 0)--(240:\s);
\fill (0, 0) circle (2pt);
\node at (60:0.5*\s) {$6$};
\node at (180:0.5*\s) {$6$};
\node at (300:0.5*\s) {$6$};
\end{tikzpicture}}
\subcaptionbox{\label{abnormald}}[0.2\linewidth]{\begin{tikzpicture}
\draw [line width = 1.5pt, blue] (15:\s)--(45:\s)--(75:\s)--(105:\s)--(135:\s)--(165:\s)--(195:\s)--(225:\s)--(255:\s)--(285:\s)--(315:\s)--(345:\s)--cycle;
\foreach \ang in {15, 45, 75, 105, 135, 165, 195, 225, 255, 285, 315, 345}{\fill 
(\ang:\s) circle (2pt);}
\draw (0.4*\s, 0)--(45:\s);
\draw (0.4*\s, 0)--(315:\s);
\draw (-0.4*\s, 0)--(135:\s);
\draw (-0.4*\s, 0)--(225:\s);
\draw (-0.4*\s, 0)--(0.4*\s, 0);
\fill (0.4*\s, 0) circle (2pt);
\fill (-0.4*\s, 0) circle (2pt);
\node at (0:0.7*\s) {$5$};
\node at (180:0.7*\s) {$5$};
\node at (90:0.5*\s) {$6$};
\node at (270:0.5*\s) {$6$};
\end{tikzpicture}}
\subcaptionbox{\label{abnormale}}[0.2\linewidth]{\begin{tikzpicture}
\draw [line width = 1.5pt, blue] (0:\s)--(30:\s)--(60:\s)--(90:\s)--(120:\s)--(150:\s)--(180:\s)--(210:\s)--(240:\s)--(270:\s)--(300:\s)--(330:\s)--cycle;
\foreach \ang in {0, 30, 60, 90, 120, 150, 180, 210, 240, 270, 300, 330}{\fill 
(\ang:\s) circle (2pt);}
\draw (0, 0)--(0:\s);
\draw (0, 0)--(150:\s);
\draw (0, 0)--(210:\s);
\fill (0, 0) circle (2pt);
\node at (75:0.5*\s) {$7$};
\node at (180:0.5*\s) {$4$};
\node at (-75:0.5*\s) {$7$};
\end{tikzpicture}}
\subcaptionbox{\label{abnormalf}}[0.2\linewidth]{\begin{tikzpicture}
\draw [line width = 1.5pt, blue] (0:\s)--(30:\s)--(60:\s)--(90:\s)--(120:\s)--(150:\s)--(180:\s)--(210:\s)--(240:\s)--(270:\s)--(300:\s)--(330:\s)--cycle;
\foreach \ang in {0, 30, 60, 90, 120, 150, 180, 210, 240, 270, 300, 330}{\fill 
(\ang:\s) circle (2pt);}
\draw (0.4*\s, 0)--(30:\s);
\draw (0.4*\s, 0)--(330:\s);
\draw (-0.4*\s, 0)--(150:\s);
\draw (-0.4*\s, 0)--(210:\s);
\draw (-0.4*\s, 0)--(0.4*\s, 0);
\fill (0.4*\s, 0) circle (2pt);
\fill (-0.4*\s, 0) circle (2pt);
\node at (0:0.7*\s) {$4$};
\node at (180:0.7*\s) {$4$};
\node at (90:0.5*\s) {$7$};
\node at (270:0.5*\s) {$7$};
\end{tikzpicture}}
\caption{The abnormal $12^{-}$-cycles in blue.}
\label{abnormal}
\end{figure}
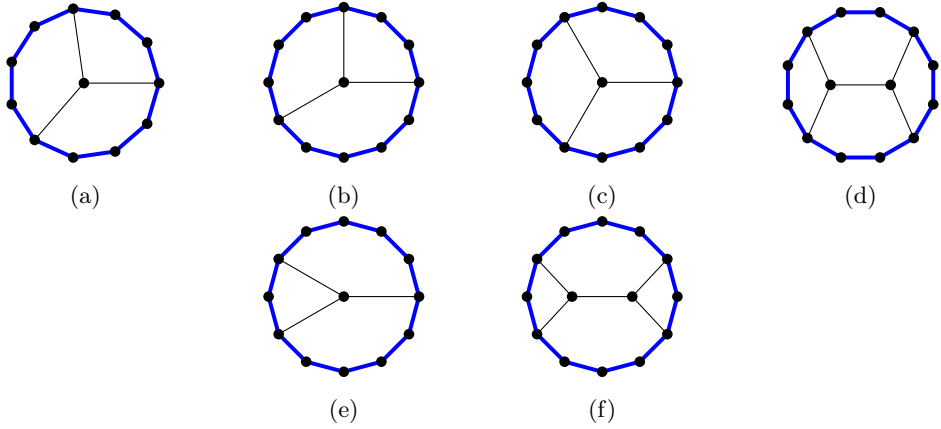

Let $\mathscr{G}$ be the class of plane graphs without triangles normally adjacent to $8^{-}$-cycles, without $4$-cycles normally adjacent to $6^{-}$-cycles, and without normally adjacent $5$-cycles. 

\begin{restatable}{theorem}{MR}\label{MR}
Let $G$ be a graph in $\mathscr{G}$. Let $S$ be a set of vertices of $G$ such that $|S| \leq 1$ or $S$ consists of all vertices on a normal cycle of $G$. Let $\mathscr{M}$ be a $3$-matching assignment for $G$ such that $\mathscr{M}$ is consistent on every closed walk of length three in $G$. If $|S| \leq 12$, then every $\mathscr{M}$-coloring $\phi$ of $G[S]$ can be extended to an $\mathscr{M}$-coloring of $G$. 
\end{restatable}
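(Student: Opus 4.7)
The plan is to imitate the minimum-counterexample-plus-discharging strategy of Dvo\v{r}\'{a}k--Postle (\autoref{N4-8}) and Liu--Li (\autoref{NON}), adapted to the weaker hypothesis that only normally adjacent $5^-$/$8^-$ cycle pairs are forbidden and to the looser form of the precolored set. I would begin by reducing to the case where $S$ bounds a face: when $|S|\geq 3$ and $S$ is the vertex set of a normal cycle $C$, the cycle $C$ separates the plane into two open disks, and in each of the two resulting subgraphs $C$ bounds a face; splitting $G$ along $C$ and extending the precoloring to each side independently shows that it suffices to prove the theorem under the added assumption that $S$ is the vertex set of the outer face of $G$.

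Next, take a counterexample $(G,S,\mathscr{M},\phi)$ minimizing $|V(G)|+|E(G)|$ and embed it so that $S$ bounds the outer face $f_0$. I would derive a battery of structural properties: $G$ is $2$-connected with no chord or short separating cycle that would yield a smaller counterexample; every vertex not in $S$ has degree at least $3$; by \autoref{ST} together with the triangle-consistency hypothesis, we may replace $\mathscr{M}$ by an equivalent assignment that makes the edges of certain short cycles straight; and a catalogue of small interior configurations (neighborhoods of $3$-, $4$-, and $5$-faces and specific clusters of $3$-vertices) are reducible. Each reducibility claim is verified by constructing a smaller plane graph $G'$ that still satisfies the hypothesis, obtaining an extension of $\phi$ on $G'$ by the minimality of $G$, and lifting it to $G$ through a case analysis on the DP-cover.

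I would close with a discharging argument based on Euler's formula, assigning initial charges $\mu(v) = d(v) - 4$ and $\mu(f) = d(f) - 4$ (with an additive adjustment on $f_0$) so that the total initial charge equals $-8$. The rules would transfer positive charge from large faces to small faces and to $3$-vertices, with rates tuned to how short cycles share edges and to whether adjacencies are normal. The task is to verify that, after the reducible configurations above are excluded, every face and every interior vertex ends with nonnegative final charge, leaving only $f_0$ negative and producing a contradiction with the $-8$ total.

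The hard part will be the reducibility step intertwined with rule design: weakening \emph{no adjacent $8^-$-cycles} to \emph{no $5^-$-cycles normally adjacent to $8^-$-cycles} permits many new arrangements of short cycles --- in particular normally adjacent $6,7,8$-cycles and path-sharing (non-normal) adjacencies among $5^-$-cycles --- that were absent in Liu and Li's setting. These patterns contribute less surplus charge, so a richer catalogue of reducible configurations and tightly calibrated discharging rules must be designed in parallel, all while respecting the extra flexibility that the precolored set can be a normal cycle of length up to $12$ rather than merely a face boundary.
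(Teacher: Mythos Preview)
Your plan is essentially the paper's approach: minimal counterexample, reduce so that $S$ bounds the outer face, structural lemmas, one key reducibility via vertex identification, then discharging with $\deg-4$ charges. Two points where the paper's execution differs from your sketch are worth knowing in advance.

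First, the minimization is three-tiered: minimize $|V(G)|$, then $|E(G)|-|S|$, then \emph{maximize} the number of edges in $\mathscr{M}$. The third criterion is what forces the edges of any triangle with two internal $3$-vertices to be full (their \autoref{full}); without it, your appeal to \autoref{ST} to straighten triangle edges is unjustified, since \autoref{ST} needs full edges on the cycle. Your single quantity $|V(G)|+|E(G)|$ will not deliver this.

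Second, you overestimate the new complexity. Although the cycle hypothesis now allows normally adjacent $6$--$8$-cycles and non-normal adjacencies among $5^-$-cycles, the paper shows early on (via chord and separating-cycle arguments) that in the minimal counterexample no $5^{-}$-\emph{face} is adjacent to any $8^{-}$-\emph{face} at all. So at the level of face adjacencies the picture is nearly as clean as in Liu--Li, and a single reducible configuration suffices: four consecutive internal $3$-vertices $w_1w_2w_3w_4$ on a $5^{+}$-face with $w_1w_2$ on a $3$-face (their \autoref{tetrad}), handled by identifying two vertices at distance $\geq 9$ in $G-\{w_1,\dots,w_4\}$. The discharging rules then distinguish internal $3$-vertices as bad/light/good according to which small face they see, and the only delicate case is $9$-faces with many bad vertices. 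The final contradiction is not that $f_0$ stays negative, but that all final charges are nonnegative with total zero (the paper adds $+8$ to $\mu(D)$), forcing an extremal structure that is exactly the forbidden abnormal $12$-cycle.
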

\begin{remark}
The graphs in \autoref{abnormal} are in the class $\mathscr{G}$. It is observed that not every $\mathscr{M}$-coloring of the $11$- or $12$-cycle can be extended to the whole graph. Thus, we require the condition that $S$ consists of all vertices on a ``normal'' cycle. On the other hand, we find all the non-extendable $12^{-}$-cycles. 
\end{remark}

\begin{enumerate}[label = \textbf{Observation:}, leftmargin = 16ex]
\item Every $10^{-}$-cycle is normal. For a normal $12^{-}$-cycle $\mathcal{O}$, every vertex not on $\mathcal{O}$ has at most two neighbors on $\mathcal{O}$. Every edge on an abnormal $12^{-}$-cycle is contained in a $4$-, $5$-, $6$- or $7$-cycle. 
\end{enumerate}

The following result is a direct consequence of \autoref{MR}, and it extends \autoref{adj8-}. 
\begin{theorem}\label{MR-}
Every graph in $\mathscr{G}$ is $3$-choosable. 
\end{theorem}

The following three results are immediate consequences of \autoref{MR-}. The first one generalizes the $3$-colorability of such graphs described by Luo, Chen and Wang \cite{MR2330169}, and the second one generalizes the $3$-colorability of such graphs described by Wang and Chen \cite{MR2362965}. 
\begin{corollary}
Every planar graph without $4$-, $6$-, $8$-cycles is $3$-choosable. 
\end{corollary}

\begin{corollary}
Every planar graph without $4$-, $5$-, $7$-, $8$-cycles is $3$-choosable. 
\end{corollary}

\begin{remark}
\autoref{3C}, \autoref{adj8-} and \autoref{MR-} are only for $3$-choosable, but not for DP-$3$-colorable. Since we require the ``consistency'' on every closed walk of length three, the current arguments cannot guarantee the DP-3-colorability of the graphs in \autoref{N4-8}, \autoref{NON} and \autoref{MR}. So it is interesting to know whether such graphs are DP-$3$-colorable. 
\end{remark}

Note that we only require the ``consistency'' on every closed walk of length three, if triangles are forbidden in a graph, then we can obtain the following results on DP-3-coloring. 
\begin{corollary}[Liu \etal \cite{MR3886261}]
Every planar graph without $3$-, $5$-, $6$-cycles is DP-3-colorable. 
\end{corollary}

\begin{corollary}[Liu \etal \cite{MR3886261}]
Every planar graph without $3$-, $6$-, $7$-, $8$-cycles is DP-3-colorable. 
\end{corollary}

There are some other sufficient conditions for planar graphs to be DP-$3$-colorable which extend the $3$-choosability of such graphs. We refer the reader to \cite{MR3969021,MR3886261,MR3954054}. DP-$4$-colorable planar or toroidal graphs can be found in \cite{MR4078909,MR4294211,Li2019,MR3996735}. Thomassen \cite{MR1290638} showed that every planar graph is $5$-choosable. Dvo\v{r}\'{a}k and Postle \cite{MR3758240} observed that every planar graph is DP-$5$-colorable. Recently, Li and Wang \cite{MR4401835} extended these results to $K_{5}$-minor-free graphs. 

In addition, we prove a result on the vertex partition using almost the same technique as that in \autoref{MR}. An \textbf{\textit{IF}-coloring} of a graph $G$ is a mapping $\phi: V(G) \rightarrow \{I, F\}$ such that the subgraph induced by all the vertices $\phi^{\mbox{-}1}(I)$ is an independent set, and the subgraph induced by all the vertices $\phi^{\mbox{-}1}(F)$ is a forest. In other words, an \textbf{\textit{IF}-coloring} of a graph $G$ is a partition of $V(G)$ into two parts $I$ and $F$, such that $G[I]$ is an independent set and $G[F]$ is a forest. 

Kawarabayashi and Thomassen \cite{MR2518200} proved that every planar graph with girth at least five has an \textit{IF}-coloring. Wang and Chen \cite{MR2390470} showed that every planar graph without $4$-, $6$- and $8$-cycles is $3$-colorable. Very recently, Liu and Yu \cite{MR4115511} proved that every planar graph without $4$-, $6$- and $8$-cycles has an \textit{IF}-coloring. We prove that every graph in $\mathscr{G}$ has an \textit{IF}-coloring. 

\begin{theorem}\label{IFMR}
The vertex set of every graph in $\mathscr{G}$ can be partitioned into an independent set and a set that induces a forest. 
\end{theorem}

This paper is organized as follows. In the remainder of this section, we introduce some notations and results utilized in the proof of \autoref{MR}. In \autoref{sec:2}, we present a proof of \autoref{MR}. In \autoref{sec:3}, we prove a slightly stronger result than \autoref{IFMR}. Finally, we conclude with some open questions in \autoref{sec:4}. 

Let $G$ be a plane graph. The edges and vertices divide the plane into a number of \textbf{faces}. The unbounded face is called the \textbf{outer face}, and the other faces are called \textbf{inner faces}. An \textbf{internal vertex} is a vertex that is not incident with the outer face. An \textbf{internal face} is a face having no common vertices with the outer cycle. Let $\mathcal{O}$ be a cycle of a plane graph $G$, the cycle $\mathcal{O}$ divides the plane into two regions, the subgraph induced by all the vertices in the unbounded region is denoted by $\ext(\mathcal{O})$, and the subgraph induced by all the vertices in the other region is denoted by $\int(\mathcal{O})$. If both $\int(\mathcal{O})$ and $\ext(\mathcal{O})$ contain at least one vertex, then we call the cycle $\mathcal{O}$ a \textbf{separating cycle} of $G$. The subgraph obtained from $G$ by deleting all the vertices in $\ext(\mathcal{O})$ is denoted by $\Int(\mathcal{O})$, and the subgraph obtained from $G$ by deleting all the vertices in $\int(\mathcal{O})$ is denoted by $\Ext(\mathcal{O})$. Let $\mathcal{N}$ be the set of inner faces having at least one common vertex with the outer face.

We need the following two special covers of cycles. 
\begin{itemize}
\item The \textbf{circular ladder $\Gamma_{n}$} is the Cartesian product of the cycle $C_{n}$ and an independent set with two vertices. 

\item The \textbf{M\"{o}bius ladder} $M_{n}$ is the graph with vertex set $\big\{\,(i, j) \mid i \in [n], j \in [2]\,\big\}$, in which two vertices $(i, j)$ and $(i', j')$ are adjacent if and only if either
\begin{enumerate}[label = ---]
\item $i' = i + 1$ and $j = j'$ for $1 \leq i \leq n-1$, or 
\item $i = n$, $i'=1$ and $j \neq j'$. 
\end{enumerate}
\end{itemize}

The following result can be derived from a theorem in \cite{MR4357325,MR3948125}. 

\begin{theorem}\label{CYCLE-CHOOSABLE}
Let $C$ be a cycle, and let $H$ be a cover with a $2$-list assignment. If the cover is not a circular ladder or a M\"{o}bius ladder, then $H$ has an $\mathscr{M}$-coloring. 
\end{theorem}

\section{Proof of \autoref{MR}}
\label{sec:2}
In this section, we prove the following main result.
\MR*

\begin{proof}
Suppose that $G$ is a minimal counterexample to \autoref{MR}. That is, there exists an $\mathscr{M}$-coloring of $G[S]$ that cannot be extended to an $\mathscr{M}$-coloring of $G$ such that 
\begin{equation}\label{EQ1}
|V(G)| + |E(G)| - |S| \text{ is minimized.}
\end{equation}
Subject to \eqref{EQ1}, 
\begin{equation}\label{EQ2}
\text{the number of edges in the $3$-matching assignment $\mathscr{M}$ is maximized.} 
\end{equation}

By the condition \eqref{EQ2}, each edge that is not in a triangle is full in the matching assignment $\mathscr{M}$. By the structure of $G$, we immediately have the following result on $8^{-}$-cycles. 
\begin{lemma}\label{3-8}
Every $8^{-}$-cycle has no chords. 
\end{lemma}

Next, we give some structural results on $G$. Some of the lemmas are almost the same as the ones in \cite{MR3758240} and \cite{MR3983123}, but for completeness we give detailed proofs here. 
\begin{lemma}\label{LT}
\text{}
\begin{enumerate}[label = (\alph*)]
\item\label{La} $S \neq V(G)$. 
\item\label{Lb} $G$ is $2$-connected, and the boundary of every face is a cycle. 
\item\label{Lc} Each vertex not in $S$ has degree at least three. 
\item\label{Ld} Either $|S| = 1$ or $G[S]$ is an induced cycle of $G$. 
\item\label{Le} There are no separating normal $k$-cycles for $3 \leq k \leq 12$. Thus, every edge on an abnormal cycle is incident with a $4$-, $5$-, $6$- or $7$-face. 
\item\label{Lf} $G[S]$ is an induced cycle of $G$. For convenience, we can redraw the graph $G$ such that $G[S]$ is the outer cycle $\mathcal{C}$ of $G$. Let $D$ be the outer face which is bounded by $G[S]$. 
\item\label{Lg} Every $5$-face ($\neq$ outer face) is incident with at most one $2$-vertex. 
\end{enumerate}
\end{lemma}
\begin{proof}
\ref{La} Suppose to the contrary that $S = V(G)$. Every $\mathscr{M}$-coloring of $G[S]$ is an $\mathscr{M}$-coloring of $G$, a contradiction. 

\ref{Lb} By the condition \eqref{EQ1}, $G$ is connected. Suppose to the contrary that $G$ has a cut-vertex $w$. We may assume that $G = G_{1} \cup G_{2}$ and $G_{1} \cap G_{2} = \{w\}$. By the assumption of the set $S$, either $S \subseteq V(G_{1})$ or $S \subseteq V(G_{2})$. We may assume that $S \subseteq V(G_{1})$. By the condition \eqref{EQ1}, the $\mathscr{M}$-coloring $\phi$ of $G[S]$ can be extended to an $\mathscr{M}$-coloring $\phi_{1}$ of $G_{1}$, and $\phi_{1}(w)$ can be extended to an $\mathscr{M}$-coloring $\phi_{2}$ of $G_{2}$. These two colorings $\phi_{1}$ and $\phi_{2}$ together give an $\mathscr{M}$-coloring of $G$ whose restriction on $G[S]$ is $\phi$, a contradiction. 

\ref{Lc} Suppose that there exists a vertex $w$ not in $S$ having degree two. By the condition \eqref{EQ1}, the $\mathscr{M}$-coloring of $G[S]$ can be extended to an $\mathscr{M}$-coloring of $G - w$. Since $w$ has degree two, there are at most two forbidden colors for $w$, thus we can extend the $\mathscr{M}$-coloring of $G - w$ to an $\mathscr{M}$-coloring of $G$, a contradiction.

\ref{Ld} If $S = \emptyset$, then we put any vertex into $S$ to make $|S| = 1$. Suppose that $S = V(\mathcal{Q})$ and $\mathcal{Q}$ is a cycle with a chord $uv$. It is observed that the $\mathscr{M}$-coloring of $G[S]$ is also an $\mathscr{M}$-coloring of the induced subgraph in $G - uv$. By the condition \eqref{EQ1}, the $\mathscr{M}$-coloring $\phi$ of $G[S]$ can be extended to an $\mathscr{M}$-coloring of $G - uv$, and hence it is also an $\mathscr{M}$-coloring of $G$, a contradiction. 

\ref{Le} We first show that $G[S]$ cannot be a separating cycle. Suppose to the contrary that $G[S]$ is a separating (normal) cycle $\mathcal{O}$. By the condition \eqref{EQ1}, the $\mathscr{M}$-coloring $\phi$ of $\mathcal{O}$ can be extended to an $\mathscr{M}$-coloring $\phi_{1}$ of $\Int(\mathcal{O})$, and another $\mathscr{M}$-coloring $\phi_{2}$ of $\Ext(\mathcal{O})$. These two colorings $\phi_{1}$ and $\phi_{2}$ together give an $\mathscr{M}$-coloring of $G$ whose restriction on $G[S]$ is $\phi$, a contradiction. 

Thus, either $|S| = 1$ or $S$ consists of all vertices on a face of $G$. Let $\mathcal{Q}$ be a separating normal $k$-cycle with $3 \leq k \leq 12$. Thus, we may assume that $S \subseteq \Ext(\mathcal{Q})$. By the condition \eqref{EQ1}, the $\mathscr{M}$-coloring $\phi$ of $G[S]$ can be extended to an $\mathscr{M}$-coloring $\varphi_{1}$ of $\Ext(\mathcal{Q})$. Similarly, the restriction of $\varphi_{1}$ on $Q$ can be extended to an $\mathscr{M}$-coloring $\varphi_{2}$ of $\Int(\mathcal{Q})$. These two colorings $\varphi_{1}$ and $\varphi_{2}$ together give an $\mathscr{M}$-coloring of $G$ whose restriction on $G[S]$ is $\phi$, a contradiction. 

\ref{Lf} According to \ref{Ld}, suppose to the contrary that $S = \{w\}$. We first assume that $w$ is on a $10^{-}$-cycle $\mathcal{Q}$. Without loss of generality, we may assume that $\mathcal{Q}$ is a shortest cycle containing $w$. Then $\mathcal{Q}$ is an induced cycle. By \ref{Le}, we may assume that $\ext(\mathcal{Q}) = \emptyset$ and $\mathcal{Q}$ is the outer cycle. By \ref{Lc} and $\mathcal{Q}$ is an induced cycle, every vertex on $\mathcal{Q}$ other than $w$ has a neighbor in $\int(\mathcal{Q})$, which implies that $\int(\mathcal{Q}) \neq \emptyset$. By the condition \eqref{EQ1}, the $\mathscr{M}$-coloring $\phi$ of $\{w\}$ can be extended to an $\mathscr{M}$-coloring $\phi_{1}$ of $\mathcal{Q}$. By the condition \eqref{EQ1}, the $\mathscr{M}$-coloring $\phi_{1}$ of $\mathcal{Q}$ can be further extended to an $\mathscr{M}$-coloring of $G$, a contradiction. 

So we may assume that every cycle containing $w$ has length at least $11$. Let $w$ be incident with a face $[w_{1}ww_{2}\dots w_{1}]$. Let $G'$ be obtained from $G$ by adding a chord $w_{1}w_{2}$ in the face, let $S' = \{w, w_{1}, w_{2}\}$ and let the $3$-matching assignment $\mathscr{M}'$ for $G'$ be obtained from $\mathscr{M}$ by setting the matching corresponding to $w_{1}w_{2}$ is edgeless. We can easily check that $G'$ is a plane graph satisfying the assumption of \autoref{MR}. By the condition \eqref{EQ1}, the $\mathscr{M}$-coloring $\phi$ of $\{w\}$ can be extended to an $\mathscr{M'}$-coloring $\phi_{1}$ of $G'[S']$. By the condition \eqref{EQ1}, the $\mathscr{M'}$-coloring $\phi_{1}$ of $G'[S']$ can be further extended to an $\mathscr{M'}$-coloring $\varphi$ of $G'$. It is observed that $\varphi$ is an $\mathscr{M}$-coloring of $G$, a contradiction. 

\ref{Lg} Note that every $2$-vertex and its two neighbors are all on the outer cycle. Suppose to the contrary that $f = [x_{1}x_{2}x_{3}x_{4}x_{5}]$ is a $5$-face which is incident with two $2$-vertices. Note that the two $2$-vertices must be adjacent on the outer cycle, say $x_{2}$ and $x_{3}$. It follows that $x_{1}$ and $x_{4}$ are on the outer cycle $\mathcal{C}$. If $x_{5}$ has three neighbors on the outer cycle $\mathcal{C}$, then $\mathcal{C}$ is abnormal (see \autoref{abnormala} and \autoref{abnormalb}), a contradiction. Thus, $x_{5}$ has a neighbor not on the outer cycle $\mathcal{C}$, and $\mathcal{C}'= (\mathcal{C} - \{x_{2}, x_{3}\}) \cup \{x_{1}x_{5}, x_{4}x_{5}\}$ is a separating $11^{-}$-cycle. By \ref{Le}, $\mathcal{C}'$ is an abnormal $11$-cycle (see \autoref{abnormala}). It follows that $\mathcal{C}$ is an abnormal $12$-cycle (see \autoref{abnormald}), a contradiction. 
\end{proof} 

\begin{lemma}\label{ADJF}
There are no $3$-faces adjacent to $8^{-}$-faces, no $4$-faces adjacent to $6^{-}$-faces, and no adjacent $5$-faces. 
\end{lemma}
\begin{proof}
Recall that every face is bounded by a cycle. Assume that $f$ is an $8^{-}$-face and it is adjacent to a $5^{-}$-face $g$. By \autoref{3-8}, it suffices to consider that $g$ is a $4$- or $5$-face. 

Suppose that $f = [w_{1}w_{2}\dots w_{k}]$ is a $6^{-}$-face and $g = [uvw_{3}w_{2}]$ is a $4$-face. Since there are no $4$-cycles normally adjacent to $6^{-}$-cycles, we have that either $u$ or $v$ is on $f$. By symmetry, we may assume that $u$ is on $f$. Recall that every $6^{-}$-face is bounded by a cycle and this cycle has no chord, so $u = w_{1}$ and $v$ is not on $f$. It is observed that $w_{2}$ is a $2$-vertex and it must be on the outer cycle $\mathcal{C}$. It follows that either $f$ or $g$ is the outer face. If $f$ is the outer face, then $v$ is an internal vertex and it has a neighbor not on the outer cycle $\mathcal{C}$ (since $[w_{1}vw_{3}\dots w_{k}]$ has no chords by \autoref{3-8}), thus there is a separating $6^{-}$-cycle $[w_{1}vw_{3}\dots w_{k}]$, a contradiction. Similarly, if $g$ is the outer face, then there is an internal vertex on $f$ having a neighbor not on the outer cycle $\mathcal{C}$, thus there is a separating $6^{-}$-cycle containing $w_{1}vw_{3}$, a contradiction. 

Suppose that $f$ and $g$ are $5$-faces. Since every $8^{-}$-cycle has no chord, there are only four cases (up to symmetry) for the local structures (see \autoref{5-8}). Since there are no normally adjacent $5$-cycles, the first case will not appear. For the other three cases, we first assume that $x$ is an internal vertex. Since every internal vertex has degree at least three, $x$ has a neighbor $x'$ other than $w_{2}$ and $y$. It is observed that $x$ is on a $6^{-}$-cycle $\mathcal{O}_{x}$ not containing $w_{3}$. If $x'$ is on $\mathcal{O}_{x}$, then $xx'$ is a chord of $\mathcal{O}_{x}$, but this contradicts \autoref{3-8}; if $x'$ is not on $\mathcal{O}_{x}$, then $\mathcal{O}_{x}$ is a separating $6^{-}$-cycle, this contradicts \autoref{LT}\ref{Le}. So we may assume that $x$ is on the outer cycle. In the second and third cases, $w_{3}$ is a $2$-vertex, so it is on the outer cycle, and $g$ must be the outer face. In the fourth case, by the symmetry of $x$ and $z$, we have that $z$ is on the outer cycle, and $g$ is the outer face. Therefore, $g$ is the outer face in the last three cases, and there is an internal vertex on $f$ having a neighbor not on $g$, and then there is a separating $6^{-}$-cycle containing $w_{2}xy$, a contradiction. 
\end{proof}

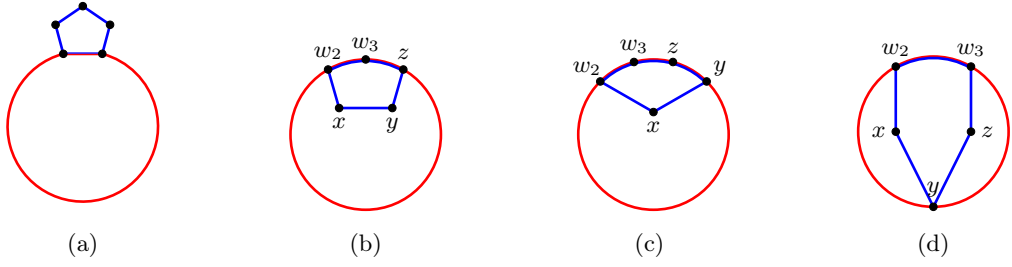
\begin{figure}%
\def\s{1}
\centering
\subcaptionbox{}{\begin{tikzpicture}
\coordinate (u) at (105:1.4*\s);
\coordinate (w2) at (105:1*\s);
\coordinate (w3) at (75:1*\s);
\coordinate (w) at (75:1.4*\s);
\coordinate (v) at (0, 1.6*\s);
\draw[line width=1pt, blue] (w2)--(u)--(v)--(w)--(w3);
\draw[line width=0.55pt, blue] (105:1.01*\s)--(75:1.01*\s);
\draw[line width=1pt, red] (105:1*\s) arc (105:75+360:1*\s);
\draw[line width=0.55pt, red] (105:0.99*\s)--(75:0.99*\s);
\node[circle, inner sep =1, fill, draw] () at (u) {};
\node[circle, inner sep =1, fill, draw] () at (v) {};
\node[circle, inner sep =1, fill, draw] () at (w) {};
\node[circle, inner sep =1, fill, draw] () at (w2) {};
\node[circle, inner sep =1, fill, draw] () at (w3) {};
\end{tikzpicture}}\hspace{1.5cm}
\subcaptionbox{}{\begin{tikzpicture}
\draw[line width=1pt, red] (0,0) circle (1*\s); 
\coordinate (u) at (120:1*\s);
\coordinate (w2) at (90:1*\s);
\coordinate (w3) at (60:1*\s);
\coordinate (w) at (45:0.5*\s);
\coordinate (v) at (135:0.5*\s);
\draw[line width=1pt, blue] (u) node [above, black]{\small $w_{2}$}--(v) node [below, black]{\small $x$}--(w) node [below, black]{\small $y$}--(w3) node [above, black]{\small $z$};
\draw[line width=1pt, blue] (60:0.98*\s) arc (60:90:0.98*\s) node [above, black]{\small $w_{3}$};
\draw[line width=1pt, blue] (90:0.98*\s) arc (90:120:0.98*\s);
\node[circle, inner sep =1, fill, draw] () at (u) {};
\node[circle, inner sep =1, fill, draw] () at (v) {};
\node[circle, inner sep =1, fill, draw] () at (w) {};
\node[circle, inner sep =1, fill, draw] () at (w2) {};
\node[circle, inner sep =1, fill, draw] () at (w3) {};
\end{tikzpicture}}\hspace{1.5cm}
\subcaptionbox{}{\begin{tikzpicture}
\draw[line width=1pt, red] (0,0) circle (1*\s); 
\coordinate (u) at (135:1*\s);
\coordinate (w2) at (105:1*\s);
\coordinate (w3) at (75:1*\s);
\coordinate (w) at (45:1*\s);
\coordinate (v) at (0, 0.3*\s);
\draw[line width=1pt, blue] (u)--(v) node [below, black]{\small $x$}--(w);
\draw[line width=1pt, blue] (45:0.98*\s) arc (45:135:0.98*\s);
\node[circle, inner sep =1, fill, draw] () at (u) {};
\node at (135:1.25*\s) {\small $w_{2}$};
\node[circle, inner sep =1, fill, draw] () at (v) {};
\node[circle, inner sep =1, fill, draw] () at (w) {};
\node at (45:1.25*\s) {\small $y$};
\node[circle, inner sep =1, fill, draw] () at (w2) {};
\node[above] at (w2) {\small $w_{3}$};
\node[circle, inner sep =1, fill, draw] () at (w3) {};
\node[above] at (w3) {\small $z$};
\end{tikzpicture}}\hspace{1.5cm}
\subcaptionbox{}{\begin{tikzpicture}
\draw[line width=1pt, red] (0,0) circle (1*\s); 
\coordinate (w2) at (120:1*\s);
\coordinate (w3) at (60:1*\s);
\coordinate (u) at (180:0.5*\s);
\coordinate (v) at (-90:1*\s);
\coordinate (w) at (0:0.5*\s);
\draw[line width=1pt, blue] (w2) node [above, black]{\small $w_{2}$}--(u) node [left, black]{\small $x$} --(v) node [above, black]{\small $y$}--(w) node [right, black]{\small $z$}--(w3) node [above, black]{\small $w_{3}$};
\draw[line width=1pt, blue] (60:0.98*\s) arc (60:120:0.98*\s);
\node[circle, inner sep =1, fill, draw] () at (u) {};
\node[circle, inner sep =1, fill, draw] () at (v) {};
\node[circle, inner sep =1, fill, draw] () at (w) {};
\node[circle, inner sep =1, fill, draw] () at (w2) {};
\node[circle, inner sep =1, fill, draw] () at (w3) {};
\end{tikzpicture}}
\caption{A $5$-face is adjacent to an $8^{-}$-face, where the blue cycle bounds a $5$-face and the red cycle bounds an $8^{-}$-face.}
\label{5-8}
\end{figure}

\begin{lemma}\label{full}
If $[x_{1}x_{2}x_{3}]$ is a triangle and $x_{2}, x_{3}$ are internal $3$-vertices, then all the edges in the triangle are full. 
\end{lemma}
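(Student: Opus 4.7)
The plan is to argue by contradiction using the maximality condition \eqref{EQ3}. Suppose some edge $e = uv$ of the triangle $ww_{1}w_{2}$ is not full, so there exist $(u, c_u) \in L_u$ and $(v, c_v) \in L_v$ both unmatched in $\mathscr{M}_{e}$. Let $\mathscr{M}'$ be obtained from $\mathscr{M}$ by adjoining the single edge $(u, c_u)(v, c_v)$ to $\mathscr{M}_{e}$. Since $\mathscr{M}'$ strictly enlarges $\mathscr{M}$, by \eqref{EQ3} it must violate consistency on some closed walk of length three through $e$.

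The first key step is to show that the only closed walk of length three through $e$ is the triangle $ww_{1}w_{2}$ itself. If a second triangle $T'$ contained $e$, then $T'$ and $ww_{1}w_{2}$ would share $e$, forming a $4$-cycle with a chord, i.e., two normally adjacent $3$-cycles; this contradicts the hypothesis on $G$, since a $3$-cycle is simultaneously a $5^{-}$- and an $8^{-}$-cycle. Hence the new inconsistency must occur on $ww_{1}w_{2}$: letting $z$ be the third vertex of the triangle, $(v, c_v)$ is matched in $\mathscr{M}_{vz}$ to some $(z, c_z)$, which is matched in $\mathscr{M}_{zu}$ to some $(u, c_u^{\ast})$ with $c_u^{\ast} \neq c_u$. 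For fixed $c_v$, the value $c_u^{\ast}$ is pinned down by $\mathscr{M}$, so if either $|U_u| \geq 2$ or $|U_v| \geq 2$ (writing $U_u, U_v$ for the sets of colors unmatched in $\mathscr{M}_{e}$), we can choose a different unmatched pair avoiding the forbidden relation and add the edge consistently, contradicting \eqref{EQ3}. This forces $|\mathscr{M}_{e}| = 2$ with $U_u = \{c_u\}$, $U_v = \{c_v\}$ and the triangle chain fully pinned.

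In this tight residual case I would invoke \eqref{EQ1} by deleting an internal $3$-vertex incident with the triangle. For each of the three positions $e \in \{ww_{1}, ww_{2}, w_{1}w_{2}\}$, at least one of $w, w_{2}$ is an internal $3$-vertex suited for removal; after deleting it and applying \eqref{EQ1} to the smaller graph, the pinned chain around the triangle forces two of the three colors that could be forbidden at the deleted vertex to coincide, leaving a free color and extending the coloring to $G$, a contradiction. The main obstacle is the subcase where the three forbidden colors at the deleted vertex remain distinct; there I would delete both internal $3$-vertices $w$ and $w_{2}$ simultaneously, apply \eqref{EQ1} to $G - \{w, w_{2}\}$, and reduce to a two-vertex list-extension problem on the edge $ww_{2}$ whose effective lists, by the non-fullness of $\mathscr{M}_{e}$ within the triangle, are always jointly compatible, completing the contradiction.
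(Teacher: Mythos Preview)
Your opening moves are sound: each edge of the triangle lies in no other triangle (by the forbidden adjacency), and the counting argument in step~6 does go through once you observe that the two consistency constraints from the walk $uvzu$ and from $vuzv$ are actually the \emph{same} constraint (both say $\tau\circ\sigma(c_v)=c_u$, where $\sigma,\tau$ are the partial bijections on $vz,zu$), so a single forbidden value really is all that is pinned for each choice of $c_v$.

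The genuine gap is in your ``tight residual case'' (steps 8--9). When you delete an internal $3$-vertex, say $w$, and invoke~\eqref{EQ1} to extend $\phi$ to $G-w$, you have \emph{no control} over the colors $\phi'(w_1),\phi'(w_2),\phi'(w')$ that the neighbors of $w$ receive. The ``pinned chain'' $(w_2,c_v)\text{--}(w,\sigma(c_v))\text{--}(w_1,c_u^{\ast})$ only tells you that two forbidden colors at $w$ coincide \emph{if} $\phi'(w_2)=c_v$ and $\phi'(w_1)=c_u^{\ast}$; there is no reason the extension should land on those particular values. The consistency hypothesis on the triangle is a statement about triples of matched colors, and it says nothing when $(w_1,\phi'(w_1))(w_2,\phi'(w_2))\notin\mathscr{M}_{w_1w_2}$, which is exactly the situation after a valid extension. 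The same objection applies to deleting both $w$ and $w_2$: each ends up with a list of size~$\geq 1$ after removing the two external conflicts, but if both lists have size exactly~$1$ the remaining colors may well be matched in $\mathscr{M}_{ww_2}$; the non-fullness of $e$ (which need not even be $ww_2$) does not prevent this.

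The paper takes a different route that sidesteps this difficulty. It first applies \autoref{ST} to straighten $ww_1$ and $ww_2$, then builds a new assignment $\mathscr{M}'$ in which \emph{all three} triangle edges are straight and full. Since $\mathscr{M}_{ww_1}\subseteq\mathscr{M}'_{ww_1}$ and $\mathscr{M}_{ww_2}\subseteq\mathscr{M}'_{ww_2}$, any failure of the resulting $\mathscr{M}'$-coloring $\phi'$ to be an $\mathscr{M}$-coloring must occur on $w_1w_2$, which localizes the analysis. A short case split then either recolors $w_2$ and $w$ in order (here the degree-$3$ hypothesis is actually used) or exhibits a single edge that can be added to $\mathscr{M}_{ww_2}$ without creating a new inconsistency, contradicting~\eqref{EQ3}. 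The straightening step is what makes the endgame tractable; without it, the tight case you isolate does not unravel by vertex deletion alone.
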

\begin{proof}
Suppose to the contrary that at least one of $x_{1}x_{3}, x_{2}x_{3}$ and $x_{1}x_{2}$ is not full. By applying \autoref{ST} to $\{x_{1}x_{3}, x_{2}x_{3}\}$, we may assume that $x_{1}x_{3}$ and $x_{2}x_{3}$ are straight in $\mathscr{M}$. Let $\mathscr{M}'$ be a new $3$-matching assignment for $G$ by setting $\mathscr{M}_{e}' = \mathscr{M}_{e}$ for each $e \notin \{x_{1}x_{3}, x_{2}x_{3}, x_{1}x_{2}\}$ and all edges in $\{x_{1}x_{3}, x_{2}x_{3}, x_{1}x_{2}\}$ are straight and full. Note that $x_{1}x_{3}$ and $x_{2}x_{3}$ are straight in $\mathscr{M}$, thus $\mathscr{M}_{x_{1}x_{3}} \subseteq \mathscr{M}_{x_{1}x_{3}}'$ and $\mathscr{M}_{x_{2}x_{3}} \subseteq \mathscr{M}_{x_{2}x_{3}}'$. Since all the edges in $\{x_{1}x_{3}, x_{2}x_{3}, x_{1}x_{2}\}$ are full in $\mathscr{M}'$ but not in $\mathscr{M}$, the number of edges in $\mathscr{M}'$ is greater than that in $\mathscr{M}$. Since there are no adjacent triangles, every closed walk of length three is consistent in $\mathscr{M}'$. By the condition \eqref{EQ2}, the $\mathscr{M}$-coloring $\phi$ (also $\mathscr{M}'$-coloring) of the outer cycle $\mathcal{C}$ can be extended to an $\mathscr{M}'$-coloring $\phi'$ of $G$, but $\phi'$ is not an $\mathscr{M}$-coloring of $G$ by our assumption. Note that $\mathscr{M}_{e} \subseteq \mathscr{M}_{e}'$ for any $e \neq x_{1}x_{2}$, so we may assume that $\phi'(x_{1}) = 1$, $\phi'(x_{2}) = 2$ and $(x_{1}, 1)(x_{2}, 2) \in \mathscr{M}_{x_{1}x_{2}}$. If $(x_{1}, 1)$ has an incident edge in $\mathscr{M}_{x_{1}x_{3}}$ and $(x_{2}, 2)$ has an incident edge in $\mathscr{M}_{x_{2}x_{3}}$, then the closed walk $x_{3}x_{1}x_{2}$ is not consistent in $\mathscr{M}$, a contradiction. If $(x_{1}, 1)$ has no incident edge in $\mathscr{M}_{x_{1}x_{3}}$, then we can modify $\phi'$ to obtain an $\mathscr{M}$-coloring of $G$ by recoloring $x_{2}$ and $x_{3}$ in order, a contradiction. So we may assume that $(x_{1}, 1)$ has an incident edge in $\mathscr{M}_{x_{1}x_{3}}$ and $(x_{2}, 2)$ has no incident edge in $\mathscr{M}_{x_{2}x_{3}}$. Since $x_{1}x_{3}$ is straight in $\mathscr{M}$, we have that $(x_{1}, 1)(x_{3}, 1) \in \mathscr{M}_{x_{1}x_{3}}$. Furthermore, we may assume that $(x_{3}, 1)$ has no incident edge in $\mathscr{M}_{x_{2}x_{3}}$, otherwise the closed walk $x_{2}x_{3}x_{1}$ is not consistent in $\mathscr{M}$. Now, we can obtain a new $3$-matching assignment $\mathscr{M}^{*}$ for $G$ by adding an edge $(x_{3}, 1)(x_{2}, 2)$ to $\mathscr{M}$. By the adjacency of cycles, $x_{2}x_{3}$ is only contained in an unique triangle $x_{1}x_{2}x_{3}$, so the addition of $(x_{3}, 1)(x_{2}, 2)$ does not make $\mathscr{M}^{*}$ inconsistent on closed $3$-walk, but this contradicts the condition \eqref{EQ2}. 
\end{proof}

\begin{figure}
\centering
\begin{tikzpicture}
\def\s{5}
\coordinate (w0) at (127.5:\s);
\coordinate (w1) at (112.5:\s);
\coordinate (w2) at (97.5:\s);
\coordinate (w3) at (82.5:\s);
\coordinate (w4) at (67.5:\s);
\coordinate (w) at (105:1.2*\s);
\coordinate (w') at (75:1.2*\s);
\draw (45:\s) arc(45:135:\s);
\draw (w1)--(w)--(w2);
\draw (w3)--(w');
\draw[dashed] (w0)..controls+(90:1cm) and +(180:1cm)..(w)--(w');
\node[circle, inner sep =1, fill, draw] () at (w0) {};
\node[circle, inner sep =1, fill, draw] () at (w1) {};
\node[circle, inner sep =1, fill, draw] () at (w2) {};
\node[circle, inner sep =1, fill, draw] () at (w3) {};
\node[circle, inner sep =1, fill, draw] () at (w4) {};
\node[circle, inner sep =1, fill, draw] () at (w) {};
\node[circle, inner sep =1, fill, draw] () at (w') {};
\node[below] at (w0) {$w_{0}$};
\node[below] at (w1) {$w_{1}$};
\node[below] at (w2) {$w_{2}$};
\node[below] at (w3) {$w_{3}$};
\node[below] at (w4) {$w_{4}$};
\node[above] at (w) {$w$};
\node[above] at (w') {$w'$};
\end{tikzpicture}
\caption{A case in \autoref{SIMITETRAD}.}
\label{WON}
\end{figure}

\begin{lemma}\label{SIMITETRAD}
Let $w_{0}, w_{1}, w_{2}, w_{3}$ and $w_{4}$ be five consecutive vertices on a $5^{+}$-face. If $w_{1}, w_{2}, w_{3}$ and $w_{4}$ are all $3$-vertices, and $w_{1}w_{2}$ is incident with a $3$-face $ww_{1}w_{2}$, then at least one vertex in $\{w_{1}, w_{2}, w_{3}, w_{4}\}$ is on the outer cycle $\mathcal{C}$. 
\end{lemma}
\begin{proof}
Suppose to the contrary that none of $\{w_{1}, w_{2}, w_{3}, w_{4}\}$ is on the outer cycle $\mathcal{C}$. Let $w'$ be the neighbor of $w_{3}$ other than $w_{2}, w_{4}$, and let $G^{*} = G - \{w_{1}, w_{2}, w_{3}, w_{4}\}$. It is observed that $w_{0}, w_{1}, w_{2}, w_{3}, w_{4}, w$ and $w'$ are seven distinct vertices. We claim that the distance between $w_{0}$ and $w'$ is at least nine in $G^{*}$. Let $P$ be a shortest path between $w_{0}$ and $w'$ in $G^{*}$. It is observed that $\mathcal{Q} = P \cup w_{0}w_{1}w_{2}w_{3}w'$ is a cycle. If $w$ is on the path $P$, then $P[w_{0}, w] \cup w_{0}w_{1}w$ and $P[w, w'] \cup ww_{2}w_{3}w'$ are all cycles (see \autoref{WON}), which implies that these two cycles have length at least nine and $|E(P)| \geq (9-2) + (9-3) = 13$. If $w$ is not on the path $P$, then $\mathcal{Q}$ is a separating normal cycle (note that $w$ and $w_{4}$ are in different sides of the cycle $\mathcal{Q}$) and it has length at least $13$, which implies that $|E(P)| = |\mathcal{Q}| - 4 \geq 13 - 4 = 9$. Therefore, the distance between $w_{0}$ and $w'$ is at least nine in $G^{*}$. 

By \autoref{full} and \autoref{ST}, we may assume that all the edges incident with the vertices in $\{w_{1}, w_{2}, w_{3}\}$ are straight. Let $G'$ be the graph obtained from $G^{*}$ by identifying $w_{0}$ and $w'$, and let $\mathscr{M'}$ be the restriction of $\mathscr{M}$ on $E(G')$. Since the distance between $w_{0}$ and $w'$ is at least nine in $G^{*}$, the graph $G'$ has no loops, no multiple edges and no new $8^{-}$-cycles, thus $G'$ is a simple plane graph satisfying the assumption of \autoref{MR}, and $\mathscr{M}'$ is consistent on every closed walk of length three in $G'$. Moreover, $\mathcal{C}$ is also a normal cycle of $G'$ and it has no chords in $G'$. This implies that $\phi$ is an $\mathscr{M}'$-coloring of $G'[S]$. Since $|V(G')| < |V(G)|$, the $\mathscr{M}'$-coloring $\phi$ of $G'[S]$ can be extended to an $\mathscr{M}'$-coloring $\varphi$ of $G'$. Since $w_{3}$ and $w_{4}$ are all $3$-vertices, we can extend $\varphi$ to $w_{4}$ and $w_{3}$ in order. Recall that all the edges incident with vertices in $\{w_{1}, w_{2}, w_{3}\}$ are straight, thus we may assume that $w_{0}$ and $w_{3}$ have distinct colors, and then we can further extend the coloring to $w_{2}$ and $w_{1}$, a contradiction. 
\end{proof}

Let $w$ be a vertex on the outer cycle $\mathcal{C}$, and let $w_{1}, w_{2}, \dots, w_{k}$ be consecutive neighbors in a cyclic order. If $f$ is a face in $\mathcal{N}$ incident with $ww_{i}$ and $ww_{i+1}$, but neither $ww_{i}$ nor $ww_{i+1}$ is an edge of $\mathcal{C}$, then we call $f$ a \textbf{special} face (at $w$). A $4$-face is a \textbf{$4^{*}$-face} if it has three common vertices with $\mathcal{C}$. An internal $3$-vertex is \textbf{bad} if it is incident with a $3$-face which is not special, \textbf{light} if it is incident with an internal $4$-face or a $4^{*}$-face or a special $3$-face, \textbf{good} if it is neither bad nor light. According to \autoref{SIMITETRAD}, we have the following result on bad vertices. 

\begin{lemma}\label{BAD}
There are no five consecutive bad vertices on the boundary of a $5^{+}$-face. 
\end{lemma}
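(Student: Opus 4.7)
The plan is to reduce the statement directly to \autoref{tetrad}. Suppose, for contradiction, that $v_{0}, v_{1}, v_{2}, v_{3}, v_{4}$ are five consecutive bad vertices on a $5^{+}$-face $f$. By the definition of \emph{bad}, each $v_{i}$ is an internal $3$-vertex incident with some $3$-face, so in particular $v_{1}, v_{2}, v_{3}, v_{4}$ are all internal $3$-vertices, exactly as demanded by the hypothesis of \autoref{tetrad}.

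The key observation I would then make concerns the middle vertex $v_{2}$. Since $v_{2}$ has degree three and is on $f$, it is incident with exactly three faces: the face $f$ (between its edges $v_{1}v_{2}$ and $v_{2}v_{3}$), and two others, each of which shares with $f$ exactly one of the two edges $v_{1}v_{2}$ and $v_{2}v_{3}$. Because $v_{2}$ is bad, one of those two "other" faces is a $3$-face $T$; consequently, $T$ contains either $v_{1}v_{2}$ or $v_{2}v_{3}$.

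Now I would split into two symmetric cases. If $T$ contains $v_{1}v_{2}$, then $T = u v_{1} v_{2}$ for the third neighbor $u$ of $v_{2}$, so \autoref{tetrad} applies with $(w_{0}, w_{1}, w_{2}, w_{3}, w_{4}) = (v_{0}, v_{1}, v_{2}, v_{3}, v_{4})$: the four $3$-vertices $w_{1}, w_{2}, w_{3}, w_{4}$ sit on $f$, and $w_{1}w_{2}$ lies on a $3$-face. The conclusion is that at least one of $v_{1}, v_{2}, v_{3}, v_{4}$ must lie on $\mathcal{C}$, which contradicts the fact that bad vertices are internal. If instead $T$ contains $v_{2}v_{3}$, I would apply the same argument to the reversed ordering $(w_{0}, w_{1}, w_{2}, w_{3}, w_{4}) = (v_{4}, v_{3}, v_{2}, v_{1}, v_{0})$, so that $w_{1}w_{2} = v_{3}v_{2}$ is again on the $3$-face $T$; we reach the same contradiction.

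There is not really a substantive obstacle here: the statement is essentially a repackaging of \autoref{tetrad} once one notices that the $3$-face witnessing the badness of $v_{2}$ is forced to share one of the two edges of $f$ at $v_{2}$. The only points requiring attention are (i) verifying that $v_{1}, v_{2}, v_{3}, v_{4}$ do indeed satisfy the degree-three hypothesis of \autoref{tetrad} (immediate from \emph{bad}), and (ii) handling both possible positions of $T$ by a reflection of the labelling, which is why focusing on the middle vertex $v_{2}$ (rather than an endpoint) is convenient.
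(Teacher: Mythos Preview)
Your argument is correct and matches the paper's approach: the paper simply states that \autoref{BAD} follows from \autoref{tetrad} without spelling out details, and your proof is precisely the routine unpacking of that implication. Focusing on the middle vertex $v_{2}$ so that the $3$-face is forced onto $v_{1}v_{2}$ or $v_{2}v_{3}$, and then handling the two cases by a reflection, is exactly the intended (and only) step.
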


\begin{lemma}\label{4F}
If a $4$-face in $\mathcal{N}$ has exactly two common vertices with $\mathcal{C}$, then these two vertices are consecutive on the $4$-face. 
\end{lemma}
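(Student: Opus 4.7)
The plan is to suppose $f = v_1 v_2 v_3 v_4$ is a $4$-face in $\mathcal{N}$ with $|V(f) \cap V(\mathcal{C})| \geq 2$ and rule out the three "bad" configurations: all four vertices on $\mathcal{C}$, exactly three on $\mathcal{C}$, or exactly two on $\mathcal{C}$ that are opposite (rather than consecutive) on $f$. The induced-cycle property of $\mathcal{C}$ (\autoref{LT}\ref{Lf}) is the workhorse: every edge of $f$ joining two vertices of $\mathcal{C}$ must itself be an edge of $\mathcal{C}$. In the all-four-on-$\mathcal{C}$ case, all four edges of $f$ lie in $\mathcal{C}$, forcing $\mathcal{C}$ to be the $4$-cycle bounding $f$; this makes $G = \mathcal{C}$ and $S = V(G)$, contradicting \autoref{LT}\ref{La}.

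For the three-on-$\mathcal{C}$ case, say $v_1, v_2, v_3 \in V(\mathcal{C})$ (so $v_1 v_2, v_2 v_3 \in E(\mathcal{C})$) and $v_4$ internal, I let $P$ denote the arc of $\mathcal{C}$ from $v_1$ to $v_3$ avoiding $v_2$ and consider the cycle $C^{*} := P \cup v_1 v_4 v_3$ of length $|\mathcal{C}| \leq 12$ (note $|\mathcal{C}| \geq 4$, else $v_1 v_3$ would be a chord of $f$ contradicting \autoref{3-8}). By \autoref{LT}\ref{Lc}, $v_4$ has a third neighbor $v_4' \notin \{v_1, v_2, v_3\}$ (the exclusion of $v_2$ is because $v_2 v_4$ would be a chord of $f$). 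If $v_4' \in V(P)$, splitting $P$ at $v_4'$ produces an $8^{-}$-cycle through $v_4, v_4'$ that shares exactly one edge ($v_1 v_4$ or $v_3 v_4$) with $f$, giving a forbidden $5^{-}/8^{-}$ normally adjacent pair. Otherwise $v_4'$ is internal, $v_4' \in \int(C^{*})$ while $v_2 \in \ext(C^{*})$, so $C^{*}$ is a separating cycle. The two-opposite-on-$\mathcal{C}$ case (with $v_1, v_3 \in V(\mathcal{C})$, $v_2, v_4$ internal, $v_1 v_3 \notin E(G)$) is analogous: both arcs of $\mathcal{C}$ between $v_1, v_3$ have length $\geq 2$, and choosing the arc $P$ on the same side of the internal path $v_1 v_4 v_3$ as $v_2$ yields a separating cycle $D := P \cup v_1 v_4 v_3$ with $v_2 \in \int(D)$ and $|D| \leq |\mathcal{C}| \leq 12$.

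The main obstacle is verifying that the extracted cycle is normal, so that \autoref{LT}\ref{Le} applies. Since $\mathcal{C}$ is normal by hypothesis, any vertex $u$ outside the cycle has at most two neighbors on $V(P) \subseteq V(\mathcal{C})$, so an internal $u$ with three neighbors on $C^{*}$ must satisfy $u v_4 \in E(G)$ together with two neighbors $w_1, w_2 \in V(P)$. If some $w_i = v_j$ for $j \in \{1,3\}$, then the triangle $u v_4 v_j$ is normally adjacent to $f$ via the edge $v_j v_4$, a forbidden $5^{-}/8^{-}$ pair. Otherwise $w_1, w_2$ are interior vertices of $P$, and a pigeonhole argument on $P$ (of length $\leq 10$) delivers a cycle through $u$, $v_4$, and one $w_i$ (using the shorter subarc of $P$ and the edge $v_1 v_4$ or $v_3 v_4$) of length at most $8$ sharing only the edge $v_j v_4$ with $f$, again violating the hypothesis. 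The same sub-analysis rules out abnormality of $D$, so both extracted cycles are normal separating cycles of length in $[4,12]$, contradicting \autoref{LT}\ref{Le} and completing the proof.
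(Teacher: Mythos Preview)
Your argument is correct and follows the paper's approach: use \autoref{LT}\ref{Lf} (the induced property of $\mathcal{C}$) to pin down the configurations, then in each bad case exhibit either a separating normal $12^{-}$-cycle contradicting \autoref{LT}\ref{Le} or a forbidden normally adjacent $5^{-}/8^{-}$ pair; the paper's proof is simply terser and asserts normality of the constructed cycle without the verification you supply. One small omission in that verification: you treat only \emph{internal} $u$, but you must also dispose of $u \in V(\mathcal{C}) \setminus V(C^{*})$ (namely $u=v_2$) and $u \in V(\mathcal{C}) \setminus V(D)$ (interior vertices of the other arc), which is immediate since $\mathcal{C}$ is induced and any additional adjacency $uv_4$ would again yield a triangle normally adjacent to $f$.
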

\begin{proof}
Suppose that $f$ is a $4$-face in $\mathcal{N}$ that has exactly two common vertices with $\mathcal{C}$. If these two vertices are not consecutive on the $4$-face, then there exists a separating normal $8^{-}$-cycle, a contradiction. 
\end{proof}

Assume that $f = [v_{1}v_{2}\dots v_{l}]$ is an internal $(3, 3, 3, 3)$-face or an internal $(3, 3, 3, 3, 3)$-face. Let $u_{i}$ be the third neighbor of $v_{i}$ for $1 \leq i \leq l$. Note that every $8^{-}$-cycle has no chords, and there are no separating $4$- or $5$-cycles. It is observed that $\{u_{i}, v_{i}\mid 1 \leq i \leq l\}$ contains $2l$ distinct vertices. Let $\Gamma$ be the graph $G \setminus (V(f) \setminus \{v_{3}\})$, and let $G^{*}$ be the graph obtained from $\Gamma$ by identifying $u_{1}$ and $v_{3}$ into a new vertex $z$. 

\begin{lemma}\label{CLOSED}
The graph $G^{*}$ is in the class $\mathscr{G}$, and $\mathcal{C}$ is an induced cycle of $G^{*}$. \end{lemma}
\begin{proof}
Let $P$ be an arbitrary path between $u_{1}$ and $v_{3}$ in $\Gamma$. It is observed that $\mathcal{Q} = P \cup u_{1}v_{1}v_{2}v_{3}$ is a cycle of length $|E(P)| + 3$. 

(i) Assume that $u_{2}$ is not on the path $P$. It is easy to check that $|E(P)| \leq 8$ only if $|E(P)| = 8$ and $\mathcal{Q}$ is a separating abnormal $11$-cycle (note that $u_{2}$ and $v_{l}$ are in different sides of the cycle $\mathcal{Q}$). Note that there are no $3$-cycles normally adjacent to abnormal cycles. Even though the identification may make the path $P$ becomes an $8$-cycle, but the new cycle is not normally adjacent to any $3$-cycle in $G^{*}$. 

(ii) Assume that $u_{2}$ is on the path $P$. Then $Q_{1} = P[u_{1}, u_{2}] \cup u_{1}v_{1}v_{2}u_{2}$ and $Q_{2} = P[u_{2}, v_{3}] \cup u_{2}v_{2}v_{3}$ are all cycles, which implies that each of these two cycles has length at least six and $|E(P)| \geq (6 - 2) + (6 - 3) = 7$. It is easy to check that $|E(P)| = 7$ only if $|Q_{1}| = |Q_{2}| = 6$, and $|E(P)| = 8$ only if $|Q_{i}| = 6$ and $|Q_{3-i}| = 7$. Recall that there are no $3$-cycles normally adjacent to $8^{-}$-cycles, and there are no $4$-cycles normally adjacent to $6^{-}$-cycles. Therefore, even though the identification may make the path $P$ become a $7$- or $8$-cycle, but the new cycle is not normally adjacent to any $3$-cycle in $G^{*}$. 

According to the above arguments on the two cases, $G^{*}$ satisfies the requirement for adjacency in \autoref{MR}, and the distance of $u_{1}$ and $v_{3}$ in $\Gamma$ is at least seven. Note that $v_{3}$ is an internal vertex, $\mathcal{C}$ is still a $12^{-}$-cycle in $G^{*}$. It is easy to check that $\mathcal{C}$ is also a normal $12^{-}$-cycle in $G^{*}$, for otherwise, two new $7$-cycles are created to make $\mathcal{C}$ abnormal in $G^{*}$, but there is a separating normal $10$-cycle in $G$, a contradiction. 

Suppose that the identification creates a chord for $\mathcal{C}$. Since $v_{3}$ is an internal vertex, the vertex $u_{1}$ is on the outer cycle $\mathcal{C}$. Note that $u_{3}$ is also on the outer cycle, the two vertices $u_{1}$ and $u_{3}$ divide the cycle $\mathcal{C}$ into two paths $P'$ and $P''$. Since the distance of $u_{1}$ and $v_{3}$ in $\Gamma$ is at least seven, we have that $|E(P')| = |E(P'')| = 6$. Thus, $P' \cup u_{1}v_{1}v_{2}v_{3}u_{3}$ or $P'' \cup u_{1}v_{1}v_{2}v_{3}u_{3}$ is a separating normal $10$-cycle in $G$, a contradiction. Hence, the identification does not create chords of $\mathcal{C}$. 
\end{proof}

\begin{lemma}\label{NO33333}
There are no internal $(3, 3, 3, 3)$-faces or $(3, 3, 3, 3, 3)$-faces. 
\end{lemma}
\begin{proof}
Suppose to the contrary that $f = [v_{1}v_{2}\dots v_{l}]$ is an internal $(3, 3, 3, 3)$-face or an internal $(3, 3, 3, 3, 3)$-face. All the related vertices are labeled as the above. Noting that none of $u_{1}v_{1}, v_{1}v_{2}, v_{2}v_{3}, v_{3}u_{3}$ is in a triangle, it follows that these four edges are full. So we may assume that each of these four edges is straight. Let $H^{*}$ be the cover obtained from $H - \cup L_{v}$ by identifying $(u_{1}, j)$ and $(v_{3}, j)$ into a new vertex $(z, j)$, where $1 \leq j \leq 3$, and the union is taken over $v \in V(f) \setminus \{v_{3}\}$. By the minimality of $G$, the precoloring $\phi$ can be extended to a coloring $T^{*}$ of the cover $H^{*}$. Without loss of generality, we may assume that $(z, 1) \in T$. This coloring naturally gives a coloring $T$ of $H - \cup_{x \in V(f)} L_{x}$. For each $x \in V(f)$, let $L'_{x} = L_{x} \setminus \{(x, j_{x}) \mid \text{$(x, j_{x})$ is the neighbor of a vertex in $T$}\}$. Let $H'$ be the subgraph of $H$ induced by $\cup_{x \in V(f)} L'_{x}$. Note that $(v_{1}, 1) \notin L'_{v_{1}}$ and $(v_{3}, 1) \in L'_{v_{3}}$. Recall that $v_{1}v_{2}$ and $v_{2}v_{3}$ are full and straight, thus $H'$ is neither a circular ladder nor a M\"{o}bius ladder. By \autoref{CYCLE-CHOOSABLE}, the cover $H'$ has a coloring $T'$. Therefore, $T \cup T'$ is a coloring of the cover $H$, a contradiction. 
\end{proof}

We give the initial charge $\mu(v) = \deg(v) - 4$ for any $v \in V(G)$, $\mu(f) = \deg(f) - 4$ for any face $f \in F(G)$ other than outer face $D$, and $\mu(D) = \deg(D) + 4$ for the outer face $D$. By the Euler formula, the sum of the initial charges is zero. That is, 
\begin{equation}\label{Euler2}
\sum_{v\,\in\,V(G)}\big(\deg(v) - 4\big) + \sum_{f\,\in\,F(G) \setminus D}\big(\deg(f) - 4\big) + (\deg(D) + 4)= 0. 
\end{equation}
Next, we give the discharging rules to redistribute the charges, preserving the sum, such that the final charge of every element in $V(G) \cup F(G)$ is nonnegative, and at least one element in $V(G) \cup F(G)$ has positive final charge. This leads a contradiction to complete the proof. 

A \textbf{small face} is a $5^{-}$-face. We say that a face is a \textbf{negative face} if it is a non-special $3$-face, or an internal $(3, 3, 3, 3, 4)$-face, or an internal $4$-face. 

\medskip
The followings are the discharging rules. 
\begin{enumerate}[label = \textbf{R\arabic*.}, ref = R\arabic*]
\item\label{3F} Each non-special $3$-face receives $\frac{1}{3}$ from each incident internal vertex. 
\item\label{IN3V} Let $w$ be an internal $3$-vertex. 
\begin{enumerate}
\item\label{IN3V1} If $w$ is a bad vertex, then it receives $\frac{2}{3}$ from each incident $9^{+}$-face.
\item\label{IN3V2} If $w$ is incident with a special $3$-face or a $4^{*}$-face, then it receives $\frac{1}{2}$ from each incident $7^{+}$-face. 
\item\label{IN3V3} If $w$ is incident with an internal $4$-face, then it receives $\frac{1}{9}$ from the incident $4$-face, and $\frac{4}{9}$ from each incident $7^{+}$-face. 
\item\label{IN3V4} If $w$ is a good vertex, then it receives $\frac{1}{3}$ from each incident face. 
\end{enumerate}
\item\label{IN4V} Let $w$ be an internal $4$-vertex. Then $w$ sends $\frac{1}{3}$ to each incident internal $(3, 3, 3, 3, 4)$-face and internal $4$-face. 
\begin{enumerate}
\item\label{IN4V1} If $w$ is incident with two negative faces, then it receives $\frac{1}{3}$ from each of the other two incident faces. 
\item\label{IN4V2} If $w$ is incident with exactly one negative face $f$, and the opposite face at $w$ is a $9^{+}$-face, then $w$ receives $\frac{1}{3}$ from the opposite face. 
\item\label{IN4V3} If $w$ is incident with exactly one negative face $f$, and the opposite face at $w$ is an $8^{-}$-face, then $w$ receives $\frac{1}{6}$ from each face at $w$ that is not the opposite face. 
\end{enumerate} 
\item\label{IN5PV} Each internal $5^{+}$-vertex sends $\frac{1}{3}$ to each incident $5$-face. 
\item\label{2V} Each $2$-vertex on the outer cycle $\mathcal{C}$ receives $\frac{2}{3}$ from the incident face in $\mathcal{N}$ and $\frac{4}{3}$ from the outer face.
\item\label{3V} Each $3$-vertex on the outer cycle $\mathcal{C}$ receives $\frac{4}{3}$ from the outer face, and sends $\frac{1}{3}$ to each incident $4^{-}$-face in $\mathcal{N}$ and $\frac{1}{6}$ to each incident $5$- or $6$-face in $\mathcal{N}$.
\item\label{4V} Each $4$-vertex on the outer cycle $\mathcal{C}$ receives $1$ from the outer face, and sends $1$ to each incident special $4^{-}$-face, and $\frac{1}{3}$ to each of the other incident $6^{-}$-face in $\mathcal{N}$. 
\item\label{5PV} Each $5^{+}$-vertex on the outer cycle $\mathcal{C}$ receives $1$ from the outer face, and sends $1$ to each incident special $4^{-}$-face, and $\frac{1}{2}$ to each of the other incident $6^{-}$-face in $\mathcal{N}$. 
\end{enumerate}

\begin{lemma}
Every face other than $D$ has nonnegative final charge. 
\end{lemma}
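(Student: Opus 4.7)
The plan is to verify that every inner face $f \neq D$ has nonnegative final charge by a case analysis on $\deg(f)$, handling small faces first, then medium, then large, with the $9$-face case as the principal obstacle.

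First I would dispatch the small faces. For a $3$-face $f$, I split into special versus non-special. A $3$-vertex on $\mathcal{C}$ has only a single edge leaving $\mathcal{C}$ and so cannot support a special face; hence in the special case the unique $\mathcal{C}$-vertex $w$ incident with $f$ is a $4^+$-vertex, and rules \ref{4V} or \ref{5+V} send $1$ to $f$, giving final charge $0$. A non-special $3$-face receives $\tfrac{1}{3}$ from each of its three incident vertices via \ref{3F} (internal) and \ref{3V}--\ref{5+V} ($\mathcal{C}$-vertices), so the final charge is at least $0$. For a $4$- or $5$-face no discharging rule removes charge (good vertices, the only recipients via \ref{IN3V}, live only on $6^+$-faces), so the final charge is at least the initial value $0$ or $1$.

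For a medium face of size $k \in \{6,7,8\}$ the only outgoing charge is the $\tfrac{1}{3}$ paid to each incident good vertex via \ref{IN3V}, which totals at most $k/3$; hence the final charge is at least $(k-4) - k/3 = (2k-12)/3 \geq 0$, and any $\mathcal{C}$-inflow via \ref{3V}--\ref{5+V} is pure bonus.

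For $\deg(f) \geq 9$, I would first record the per-vertex outflow ceiling: $\tfrac{2}{3}$ for a bad vertex, $\tfrac{1}{2}$ for a light vertex, $\tfrac{1}{3}$ for a good vertex or an internal $4$-vertex triggered by \ref{IN4V}. \autoref{BAD} then confines the bad vertices on $f$ into blocks of length at most four, and \autoref{tetrad} forces the vertex immediately outside any bad block of length $\geq 2$ (assuming $f$ is internal) to be a $4^+$-vertex. Inside each bad block the $3$-face placement is essentially rigid: consecutive interior bad vertices share a single $3$-face whose third vertex lies off $f$, while each endpoint bad vertex has its $3$-face on the outside edge of the block. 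Applying the ``no $5^-$-face adjacent to $8^-$-face'' lemma to that outside edge forces the face on the far side of the adjacent $4^+$-vertex (opposite the shared $3$-face across $f$) to be a $9^+$-face, which routes any $\tfrac{1}{3}$ owed by \ref{IN4V} through that far $9^+$-face rather than through $f$.

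The main obstacle will be the $9$-face case: \autoref{BAD} permits the extremal pattern $\mathrm{BBBB\,N\,BBB\,N}$ with seven bad positions, and a naive count $\tfrac{14}{3} + 2\cdot\tfrac{1}{3} > 5$ would leave $f$ negative. The resolution combines the three ingredients above: \autoref{tetrad} forces both $\mathrm{N}$-positions to be $4^+$-vertices; a sub-case split on which edge of $f$ carries the shared $3$-face of the length-$3$ bad block shows that exactly one of the two $\mathrm{N}$-positions is a $4$-vertex with two incident non-special $3$-faces (taking $\tfrac{1}{3}$ from $f$ via the first case of \ref{IN4V}), while the other has exactly one incident $3$-face whose opposite $9^+$-face absorbs the \ref{IN4V} payment (taking $0$ from $f$). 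The accounting collapses to outflow $\tfrac{14}{3} + \tfrac{1}{3} = 5$, so the final charge is $0$; for $\deg(f) \geq 10$ the extra slack $\deg(f) - 9$ absorbs analogous contributions and the inequality closes more easily.
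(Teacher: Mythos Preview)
Your proposal has two genuine gaps that would cause the argument to fail.

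\textbf{Missing the $2$-vertex outflow.} You never account for rule \ref{2V}: each $2$-vertex on $\mathcal{C}$ receives $\tfrac{2}{3}$ from its incident face in $\mathcal{N}$. Your claim that ``for a $4$- or $5$-face no discharging rule removes charge'' is false: a $5$-face in $\mathcal{N}$ sharing an edge with $\mathcal{C}$ may be incident with up to two $2$-vertices and must pay them $\tfrac{2}{3}$ each, which exceeds its initial charge of $1$. The paper balances this using the inflow from $\mathcal{C}$-vertices via \ref{3V}--\ref{5+V}, precisely the term you dismissed as ``pure bonus.'' The same oversight wrecks your $6$--$8$ case: a $6$-face in $\mathcal{N}$ can carry three $2$-vertices and one internal good vertex, and then $(6-4) - 3\cdot\tfrac{2}{3} - \tfrac{1}{3} < 0$ without the compensating $\tfrac{1}{3}$ inflow that the paper secures in \eqref{EQ4}. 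Your $9^{+}$-face analysis likewise lists outflow ceilings only for bad, light, good and internal $4$-vertices, omitting the $\tfrac{2}{3}$ paid to $2$-vertices; the paper disposes of that situation first in \eqref{EQ5}.

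\textbf{Incomplete $9$-face analysis.} You treat only the pattern with $\alpha = 7$ bad vertices, but for $k = 9$ the cases $\alpha \in \{4,5,6\}$ are not covered by any bound you wrote. The naive estimate $5 - \tfrac{2}{3}\alpha - \tfrac{1}{2}(9-\alpha)$ is already negative at $\alpha = 4$. The paper closes $\alpha \le 5$ via a structural bound $\alpha + \beta \le k - 2$ (its Fact~2, deduced from \autoref{tetrad}), and then devotes the longest sub-argument of the lemma to $\alpha = 6$: there one must locate a light vertex, use that a light vertex cannot be flanked by two bad vertices on $f$, pin down the $3$-face placements, and show in each sub-case that some specific non-bad vertex receives nothing from $f$. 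Your sentence ``the extra slack $\deg(f) - 9$ absorbs analogous contributions'' only addresses $k \ge 10$ and leaves the $k = 9$, $\alpha = 6$ case entirely open. Even in your $\alpha = 7$ sketch, the assertion that one $N$-position carries two non-special $3$-faces while the other carries one is more specific than what is true or needed; the paper instead shows directly that one designated $N$-vertex, if it is an internal $4$-vertex, is incident with exactly one $3$-face and three $9^{+}$-faces, so \ref{IN4V} routes its $\tfrac{1}{3}$ away from $f$.
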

\begin{proof}
According to the discharging rules, inner $3$-faces never give charges. If $f$ is a special $3$-face, then $\mu'(f) = 3 - 4 + 1 = 0$ by \ref{4V} and \ref{5PV}. If $f$ is a non-special $3$-face having no vertex on the outer cycle $\mathcal{C}$, then $\mu'(f) = 3 - 4 + 3 \times \frac{1}{3} = 0$ by \ref{3F}. If $f$ is a non-special $3$-face having two vertices on the outer cycle $\mathcal{C}$, then $f$ has a common edge with the outer face by \autoref{LT}\ref{Lf}, and then $\mu'(f) \geq 3 - 4 + 3 \times \frac{1}{3} = 0$ by \ref{3F}, \ref{3V}, \ref{4V} and \ref{5PV}. Note that no inner $3$-faces have three common vertices with $\mathcal{C}$. 

Let $f$ be a $4$-face. If $f$ is an internal face, then it is incident with at least one $4^{+}$-vertex by \autoref{NO33333}, and then $\mu'(f) \geq 4 - 4 + \frac{1}{3} - 3 \times \frac{1}{9} = 0$ by \ref{IN3V3} and \ref{IN4V}. So we may assume that $f$ is a face in $\mathcal{N}$. If $f$ has exactly one common vertex with $\mathcal{C}$, then it receives $1$ from the vertex on the outer cycle $\mathcal{C}$, and then $\mu'(f) \geq 4 - 4 + 1 - 3 \times \frac{1}{3} = 0$ by \ref{IN3V4}, \ref{4V} and \ref{5PV}. If $f$ has exactly two common vertices with the outer cycle $\mathcal{C}$, then these two vertices are consecutive on $f$ by \autoref{4F}, and then $\mu'(f) \geq 4 - 4 + 2\times \frac{1}{3} - 2 \times \frac{1}{3} = 0$ by \ref{IN3V4}, \ref{3V}, \ref{4V} and \ref{5PV}. If $f$ has exactly three common vertices with the outer cycle $\mathcal{C}$, \ie $f$ is a $4^{*}$-face, then $\mu'(f) \geq 4 - 4 + 2 \times \frac{1}{3} - \frac{2}{3} = 0$ by \ref{2V}, \ref{4V} and \ref{5PV}. 

Let $f$ be a $5$-face. By \autoref{NO33333}, $f$ cannot be incident with five internal $3$-vertices. If $f$ is an internal $(3, 3, 3, 3, 4^{+})$-face, then $f$ receives $\frac{1}{3}$ from the incident $4^{+}$-vertex and sends $\frac{1}{3}$ to each incident $3$-vertex, thus $\mu'(f) = 5 - 4 + \frac{1}{3} - 4 \times \frac{1}{3} = 0$ by \ref{IN3V4}, \ref{IN4V} and \ref{IN5PV}. If $f$ is an internal face incident with at least two $4^{+}$-vertices, then $\mu'(f) \geq 5 - 4 - 3 \times \frac{1}{3} = 0$ by \ref{IN3V4}. So we may assume that $f$ is a face in $\mathcal{N}$. If $f$ has exactly one common vertex with the outer cycle $\mathcal{C}$, then it receives at least $\frac{1}{3}$ from the vertex on the outer cycle $\mathcal{C}$, and then $\mu'(f) \geq 5 - 4 + \frac{1}{3} - 4 \times \frac{1}{3} = 0$ by \ref{IN3V4}, \ref{4V} and \ref{5PV}. If $f$ has at least two common vertices with the outer cycle $\mathcal{C}$, and it is not incident with any $2$-vertex, then it receives at least $\frac{1}{6}$ from each incident vertex on the outer cycle $\mathcal{C}$, and then $\mu'(f) \geq 5 - 4 + 2\times \frac{1}{6} - 3 \times \frac{1}{3} = \frac{1}{3}$ by \ref{IN3V4}, \ref{3V}, \ref{4V} and  \ref{5PV}. If $f$ is incident with exactly one $2$-vertex, then $\mu'(f) \geq 5 - 4 + 2 \times \frac{1}{6} - \frac{2}{3} - 2 \times \frac{1}{3} = 0$ by \ref{IN3V4}, \ref{3V}, \ref{4V} and  \ref{5PV}. By \autoref{LT}\ref{Lg}, $f$ cannot be incident with more than one $2$-vertex. 

Let $f$ be an internal $6$-face. Note that there are no $4^{-}$-faces adjacent to $6$-faces, it follows that $f$ sends at most $\frac{1}{3}$ to each incident vertex, thus $\mu'(f) \geq 6 - 4 - 6 \times \frac{1}{3} = 0$. 

Let $f= [x_{1}x_{2}\dots x_{6}]$ be a $6$-face in $\mathcal{N}$. (i) Suppose that $f$ is incident with three $2$-vertices and an internal vertex $x_{6}$. If $x_{6}$ has at least three neighbors on the outer cycle $\mathcal{C}$, then $\mathcal{C}$ is an abnormal cycle (see \autoref{abnormala}, \autoref{abnormalb} and \autoref{abnormalc}), a contradiction. So $x_{6}$ has a neighbor not on $\mathcal{C}$, and $\mathcal{C}'=(\mathcal{C} - \{x_{2}, x_{3}, x_{4}\}) \cup \{x_{1}x_{6}, x_{5}x_{6}\}$ is a separating $10^{-}$-cycle, a contradiction. (ii) Suppose that $f$ is incident with one or two $2$-vertices. Thus, $f$ receives at least $\frac{1}{6}$ from each incident $3^{+}$-vertex on the outer cycle $\mathcal{C}$, and sends at most $\frac{1}{3}$ to each incident internal vertex, which implies that $\mu'(f) \geq 6 - 4 + 2 \times \frac{1}{6} - 2 \times \frac{2}{3} - 2 \times \frac{1}{3} = \frac{1}{3}$. (iii) If $f$ is not incident with any $2$-vertex, then it sends at most $\frac{1}{3}$ to each incident internal vertex, but does not send any charge to the incident vertices on the outer cycle $\mathcal{C}$, thus $\mu'(f) \geq 6 - 4 - 5 \times \frac{1}{3} > 0$. 

Let $f = [x_{1}x_{2}\dots x_{7}]$ be a $7$-face. Suppose that $f$ is an internal face. Since there are no $3$-faces adjacent to $7$-faces, we have that $f$ sends at most $\frac{4}{9}$ to each incident internal vertex. Note that seven is an odd number, $f$ is incident with at most six $3$-vertices which are incident with internal $4$-faces, thus $\mu'(f) \geq 7 - 4 - 6 \times \frac{4}{9} - \frac{1}{3} = 0$. Now, assume that $f$ is a face in $\mathcal{N}$. If $f$ is not incident with any $2$-vertex, then $\mu'(f) \geq 7 - 4 - 6 \times \frac{1}{2} = 0$. If $f$ is incident with one or two $2$-vertices, then $\mu'(f) \geq 7 - 4 - 2 \times \frac{2}{3} - 3 \times \frac{1}{2} > 0$. Consider the case $f$ is incident with exactly three $2$-vertices. It is observed that the three $2$-vertices are consecutive on the outer cycle, say $x_{2}, x_{3}$ and $x_{4}$. By the discharging rules, $f$ sends at most $\frac{1}{2}$ to each of $x_{6}$ and $x_{7}$. It follows that $\mu'(f) \geq 7 - 4 - 3 \times \frac{2}{3} - 2 \times \frac{1}{2} = 0$. Furthermore, $\mu'(f) = 0$ only if both $x_{6}$ and $x_{7}$ are $3$-vertices, and each of which is incident with a $4^{*}$-face, which implies that $\mathcal{C}$ is abnormal (see \autoref{abnormalf}), a contradiction. The final case: $f$ is incident with four $2$-vertices, say $x_{2}, x_{3}, x_{4}$ and $x_{5}$. Then it is incident with exactly one internal vertex $x_{7}$. Note that $x_{7}$ has degree at least three. If $x_{7}$ has another neighbor on the outer cycle other than $x_{1}$ and $x_{6}$, then $\mathcal{C}$ is abnormal (see \autoref{abnormalb} and \autoref{abnormale}), a contradiction. If $x_{7}$ has a neighbor not on the outer cycle, then $(\mathcal{C}\setminus \{x_{2}, x_{3}, x_{4}, x_{5}\}) \cup \{x_{1}x_{7}, x_{6}x_{7}\}$ is a separating normal $9^{-}$-cycle, a contradiction. In summary, every $7$-face has nonnegative final charge, and each $7$-face adjacent to $D$ has positive final charge. 

Let $f$ be an $8$-face. By \autoref{ADJF}, there are no $3$-faces adjacent to $8$-faces, so $f$ sends at most $\frac{1}{2}$ to each incident internal vertex. If $f$ is an internal face, then $\mu'(f) \geq 8 - 4 - 8 \times \frac{1}{2} = 0$. So we may assume that $f$ is an $8$-face in $\mathcal{N}$. If $f$ is not incident with any $2$-vertex, then $\mu'(f) \geq 8 - 4 - 7 \times \frac{1}{2} = \frac{1}{2}$. Suppose that $f$ is incident with at least one $2$-vertex. It follows that $f$ has at least two common $3^{+}$-vertices with the outer cycle $\mathcal{C}$, and it sends nothing to these vertices. Thus, $f$ sends $\frac{2}{3}$ to each incident $2$-vertex, and sends at most $\frac{1}{2}$ to each incident internal vertex, which implies that $\mu'(f) \geq 8 - 4 - 5 \times \frac{2}{3} - \frac{1}{2} > 0$. 

Let $f$ be a $k$-face with $k \geq 9$. By the discharging rules, it is easy to show the following fact.

\begin{enumerate}[label = \textbf{Fact-1.}, leftmargin = 9ex]
\item\label{Fact-1} The face $f$ sends nothing to the $3^{+}$-vertices on the outer cycle $\mathcal{C}$, and sends at most $\frac{1}{3}$ to each incident internal $4$-vertex, and sends at most $\frac{2}{3}$ to any vertex. 
\end{enumerate}

If $f$ is incident with some $2$-vertices, then $f$ is incident with at least two $3^{+}$-vertices on the outer cycle $\mathcal{C}$ and it sends nothing to these vertices, which implies that 
\begin{equation}\label{EQ5}
\mu'(f) \geq k - 4 - (k - 2) \times \frac{2}{3} = \frac{1}{3}(k - 8) > 0. 
\end{equation}
So we may assume that $f$ is not incident with any $2$-vertex. 

Let $\alpha$ be the number of incident bad vertices, $\beta$ be the number of incident light vertices, and let $\rho$ be the number of incident internal $3$-vertices. It is observed that $\alpha + \beta \leq \rho$. By \autoref{SIMITETRAD}, we can easily show the following fact on the parameters $\alpha$ and $\beta$. 

\begin{enumerate}[label = \textbf{Fact-2.}, leftmargin = 9ex]
\item\label{Fact-2} If $\alpha \geq 3$, then $\alpha + \beta \leq \rho \leq k - 2$. 
\end{enumerate}

If $\alpha \leq 3$, then $\mu'(f) \geq k - 4 - 3 \times \frac{2}{3} - (k - 3) \times \frac{1}{2} = \frac{k-9}{2} \geq 0$. If $\alpha \geq 4$ and $k \geq 10$, then $\mu'(f) \geq k - 4 - (k - 2) \times \frac{2}{3} - 2 \times \frac{1}{3} = \frac{1}{3}(k - 10) \geq 0$. It remains to assume that $\alpha \geq 4$, $k = 9$ and $f = [w_{1}w_{2}\dots w_{9}]$. 

Suppose that $\alpha = 7$. By \autoref{BAD}, the two non-bad vertices divide the bad vertices on $f$ into two parts, one consisting of three consecutive bad vertices and the other consisting of four consecutive bad vertices. Without loss of generality, we may assume that none of $w_{1}$ and $w_{6}$ is a bad vertex. By \autoref{SIMITETRAD}, $w_{1}w_{2}$, $w_{3}w_{4}$ and $w_{5}w_{6}$ are incident with $3$-faces. By symmetry, we may further assume that $w_{6}w_{7}$ and $w_{8}w_{9}$ are incident with $3$-faces. By \autoref{SIMITETRAD}, $w_{1}$ cannot be an internal $3$-vertex. If $w_{1}$ is an internal $5^{+}$-vertex or on the outer cycle $\mathcal{C}$, then $f$ sends nothing to $w_{1}$. If $w_{1}$ is an internal $4$-vertex, then $w_{1}$ is incident with three $9^{+}$-faces, and it receives nothing from $f$ by \ref{IN4V2}. Thus, $f$ sends nothing to $w_{1}$ in all cases, which implies that $\mu'(f) \geq 9 - 4 - 7 \times \frac{2}{3} - \frac{1}{3} = 0$. 

If $4 \leq \alpha \leq 5$, then $\mu'(f) \geq 9 - 4 - \alpha \times \frac{2}{3} - (\rho - \alpha) \times \frac{1}{2} - (9 - \rho) \times \frac{1}{3} = 2 - \frac{\alpha + \rho}{6} \geq 2 - \frac{5 + 7}{6} = 0$. It remains to assume that $\alpha = 6$. Recall that $f$ is not incident with any $2$-vertex. If $f$ has a common vertex with the outer cycle $\mathcal{C}$, then $f$ sends nothing to this vertex and $\mu'(f) \geq 9 - 4 - 6 \times \frac{2}{3} - 2 \times \frac{1}{2} = 0$. So we may assume that $f$ is an internal $9$-face. If $f$ is not incident with any light vertex, then $\mu'(f) \geq 9 - 4 - 6 \times \frac{2}{3} - 3 \times \frac{1}{3} = 0$. So we may assume that there is a light vertex on $f$. By the definitions of bad vertices and light vertices, a light vertex cannot be adjacent to two bad vertices on $f$, thus a light vertex must be adjacent to a non-bad vertex on $f$, which implies that the bad vertices on $f$ are divided into two parts by \autoref{BAD}. Moreover, each part has at most four consecutive bad vertices, thus each part has at least two consecutive bad vertices. Without loss of generality, we may assume that $w_{1}$ is a light vertex and $w_{9}$ is a non-bad vertex. Then $w_{2}$ and $w_{8}$ must be bad vertices. Since bad vertex and light vertex cannot be incident with the same $3$-face, $w_{2}w_{3}$ is incident with a $3$-face and $w_{1}w_{9}$ is incident with a $4^{-}$-face. By \autoref{SIMITETRAD} and \autoref{BAD}, $w_{3}$ is also a bad vertex and $w_{9}$ is not an internal $3$-vertex. Hence, $w_{9}$ must be an internal $4^{+}$-vertex. Since $w_{8}$ is a bad vertex, either $w_{7}w_{8}$ or $w_{8}w_{9}$ is incident with a $3$-face. If $w_{7}w_{8}$ is incident with a $3$-face, then $f$ sends nothing to $w_{9}$, which implies that $\mu'(f) \geq 9 - 4 - 6 \times \frac{2}{3} - 2 \times \frac{1}{2} = 0$. In the other case, $w_{8}w_{9}$ is incident with a $3$-face. Then $w_{6}w_{7}$ is incident with a $3$-face. By \autoref{SIMITETRAD}, one vertex in $\{w_{4}, w_{5}\}$ is an internal $4^{+}$-vertex, and the other is a bad $3$-vertex, thus $w_{4}w_{5}$ is incident with a $3$-face. Whenever $w_{4}$ or $w_{5}$ is an internal $4^{+}$-vertex, it receives nothing from $f$ by \ref{IN4V2}, which implies that $\mu'(f) \geq 9 - 4 - 6 \times \frac{2}{3} - \frac{1}{2} - \frac{1}{3} > 0$. \end{proof}

\begin{lemma}
Every vertex has nonnegative final charge. 
\end{lemma}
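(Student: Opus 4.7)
My plan is a direct case analysis on the type of $v$, showing in each case that the initial charge $\mu(v) = \deg(v) - 4$ plus everything $v$ receives minus everything $v$ pays is at least $0$.

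I would first handle internal vertices, partitioning by degree. For an internal $k$-vertex with $k \geq 5$, the only rule that pays charge out of $v$ is \ref{3F}; by \autoref{3-8} no two $3$-faces at $v$ share an edge, so at most $\lfloor k/2 \rfloor$ of the incident faces are $3$-faces and $\mu'(v) \geq k - 4 - \tfrac{k}{6} \geq \tfrac{1}{6}$. For an internal $4$-vertex I verify the three subcases of \ref{IN4V}; in each, \autoref{3-8} forces the two faces sharing an edge with any incident $3$-face to be $9^{+}$-faces, so the incoming charge under \ref{IN4V} exactly compensates the $\tfrac{1}{3}$ payments of \ref{3F}. For an internal $3$-vertex I split into the three types defined just above \autoref{BAD}. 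A bad vertex pays $\tfrac{1}{3}$ to its incident non-special $3$-face and its other two faces are forced to be $9^{+}$-faces by \autoref{3-8}, so it collects $2\cdot\tfrac{2}{3}$ by \ref{IN3V}. A light vertex is incident with a $5^{-}$-face (a $4$-face, a $5$-face, or a special $3$-face); its other two faces cannot be $8^{-}$-cycles by the hypothesis that no $5^{-}$-cycle is normally adjacent to an $8^{-}$-cycle, hence both are $9^{+}$-faces and $v$ collects $2\cdot\tfrac{1}{2}$ by \ref{IN3V}. A good vertex collects $3\cdot\tfrac{1}{3}$ from its three $6^{+}$-incidences. In all three subcases $\mu'(v) = 0$.

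Next I would analyze a $k$-vertex $v$ on the outer cycle $\mathcal{C}$. The cases $k = 2$ (balanced by \ref{2V}) and $k \geq 5$ (initial charge $\geq 1$ together with $1$ from $D$ is enough against \ref{5+V}, once the normal-adjacency hypothesis is used to prevent two consecutive special $5^{-}$-faces around $v$) are direct. For $k = 3$, the two inner faces $f_{1}, f_{2}$ at $v$ share the unique internal edge at $v$; the hypothesis rules out both being $5^{-}$ (a pair of normally adjacent $5^{-}$-cycles) and also one $5^{-}$ together with one $6$--$8$ (a $5^{-}$ normally adjacent to an $8^{-}$). In all surviving configurations, \ref{3V} pays in total at most $\tfrac{1}{3}$, so $\mu'(v) \geq -1 + \tfrac{4}{3} - \tfrac{1}{3} = 0$. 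For $k = 4$, only the face between the two internal edges at $v$ can be special; if this face is a special $5^{-}$-face, its two face-neighbors at $v$ are forced to be $9^{+}$-faces by the hypothesis, so the only payment is $1$ to the special face, and otherwise \ref{4V} pays at most $3\cdot\tfrac{1}{3} = 1$; either way $\mu'(v) \geq 0$.

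The main obstacle is verifying that every bad small-face configuration around a $3$- or $4$-vertex on $\mathcal{C}$ is genuinely forbidden, since the hypothesis excludes only \emph{normally} adjacent pairs. To cover the remaining non-normally adjacent situations, one uses that two $5^{-}$-face boundaries sharing more than an edge at such $v$ would either create a chord of a $9^{-}$-cycle, contradicting \autoref{3-8}, or produce a separating normal $12^{-}$-cycle, contradicting \autoref{LT}\ref{Le}. Once these bad configurations are ruled out, the values chosen in \ref{3V}--\ref{5+V} close the bookkeeping. Combined with the previous lemma on faces and a direct verification that $\mu'(D) \geq 0$ (using $|\mathcal{C}| \leq 12$ and the charges sent back by $\mathcal{C}$-vertices under \ref{2V}--\ref{5+V}), the Euler identity \eqref{Euler2} then yields the desired contradiction and completes the proof of \autoref{MR}.
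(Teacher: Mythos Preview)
Your approach is essentially the paper's: a direct case split on the type of vertex, using the fact that no $5^{-}$-face is adjacent to any $8^{-}$-face to force the neighboring faces of a small face to be $9^{+}$-faces, after which the arithmetic in \ref{3F}--\ref{5+V} closes each case exactly as you indicate. One small correction: in the light-vertex case (and implicitly in your $k=3,4$ analysis on $\mathcal{C}$) you appeal to the raw hypothesis that no $5^{-}$-cycle is \emph{normally} adjacent to an $8^{-}$-cycle, but what the argument actually uses is the stronger face-level statement that no $5^{-}$-face is adjacent to an $8^{-}$-face---this has already been proved as a separate lemma before the discharging, so you should simply cite it rather than re-derive it in your final paragraph; with that citation the ``non-normally adjacent'' worries you raise there are already handled and no extra work is required.
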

\begin{proof}
If $v$ is a $2$-vertex, then it is on the outer cycle, and it receives $\frac{2}{3}$ from the incident face in $\mathcal{N}$ and $\frac{4}{3}$ from the outer face $D$ by \ref{2V}, which implies that $\mu'(v) = 2 - 4 + \frac{2}{3} + \frac{4}{3} = 0$. Let $v$ be a $3$-vertex on the outer cycle. By the adjacency of the faces, $v$ is incident with a $4^{-}$-face and a $7^{+}$-face, or it is incident with two $5^{+}$-faces. Thus, $\mu'(v) \geq 3 - 4 + \frac{4}{3} - \max\left\{\frac{1}{3}, 2\times \frac{1}{6}\right\} = 0$ by \ref{3V}. If $v$ is a $4$-vertex on the outer cycle, then it receives $1$ from the outer face, sends $1$ to an incident special $4^{-}$-face and at most $\frac{1}{3}$ to each of the other incident $6^{-}$-face in $\mathcal{N}$ by \ref{4V}, which implies that $\mu'(v) \geq 4 - 4 + 1 - \max\{1, 3 \times \frac{1}{3}\} = 0$. If $v$ is a $5^{+}$-vertex on the outer cycle, then it receives $1$ from the outer face, and averagely sends at most $\frac{1}{2}$ to each incident face in $\mathcal{N}$, and then $\mu'(v) \geq \deg(v) - 4 + 1 - (\deg(v) - 1) \times \frac{1}{2} = \frac{\deg(v) - 5}{2}\geq 0$. 

If $v$ is a bad vertex, then $\mu'(v) = 3 - 4 + 2 \times \frac{2}{3} - \frac{1}{3}= 0$ by \ref{3F} and \ref{IN3V1}. If $v$ is incident with a $4^{*}$-face or a special $3$-face, then $\mu'(v) = 3 - 4 + 2 \times \frac{1}{2} = 0$ by \ref{IN3V2}. If $v$ is incident with an internal $4$-face, then $\mu'(v) = 3 - 4 + \frac{1}{9} + 2 \times \frac{4}{9} = 0$ by \ref{IN3V3}. If $v$ is a good vertex, then $\mu'(v) = 3 - 4 + 3 \times \frac{1}{3} = 0$ by \ref{IN3V4}. If $v$ is an internal $4$-vertex which is incident with two negative faces, then $\mu'(v) = 4 - 4 + 2\times \frac{1}{3} - 2 \times \frac{1}{3} = 0$ by \ref{3F} and \ref{IN4V}. If $v$ is an internal $4$-vertex which is incident with exactly one negative face and a $9^{+}$-face at the opposite side, then $\mu'(v) = 4 - 4 + \frac{1}{3} - \frac{1}{3} = 0$ by \ref{3F} and \ref{IN4V}. If $v$ is incident with exactly one negative face and an $8^{-}$-face at the opposite side, then $\mu'(v) = 4 - 4 + 2 \times \frac{1}{6} - \frac{1}{3} = 0$ by \ref{3F} and \ref{IN4V}. If $v$ is an internal $4$-vertex which is not incident with any negative face, then $\mu'(v) = 4 - 4 = 0$. Note that there are no adjacent $5^{-}$-faces, thus every vertex $v$ is incident with at most $\lfloor\frac{\deg(v)}{2}\rfloor$ small faces. If $v$ is an internal $5^{+}$-vertex, then it sends at most $\frac{1}{3}$ to each incident $5^{-}$-face, which implies that $\mu'(v) \geq \deg(v) - 4 - \frac{1}{3} \times \lfloor\frac{\deg(v)}{2}\rfloor > 0$. 
\end{proof}

\begin{lemma}
The outer face $D$ has nonnegative charge, and there exists an element in $V(G) \cup F(G)$ having positive final charge. 
\end{lemma}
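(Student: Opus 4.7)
The plan is to first derive $\mu'(D)$ in closed form so that nonnegativity is immediate, and then dispatch the positivity claim by a case split on $|S|$ and on whether $\mathcal{C}$ carries a vertex of degree at least four. Since the outer face is bounded by $\mathcal{C}$, we have $\deg(D)=|S|$ and $\mu(D)=|S|+4$. The only rules that remove charge from $D$ are \ref{2V}--\ref{5+V}, which jointly send $\frac{4}{3}$ to each $2$- or $3$-vertex of $\mathcal{C}$ and $1$ to each $4^+$-vertex. Writing $n_{\geq 4}$ for the number of $4^+$-vertices on $\mathcal{C}$, collecting the contributions yields
\[
\mu'(D)=|S|+4-\frac{4}{3}(|S|-n_{\geq 4})-n_{\geq 4}=\frac{1}{3}(12-|S|+n_{\geq 4}),
\]
which is nonnegative since $|S|\leq 12$; this will prove the first half.

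For positivity, the easy subcases are $|S|\leq 11$, or $|S|=12$ with $n_{\geq 4}\geq 1$: the closed-form formula then gives $\mu'(D)\geq \frac{1}{3}>0$, so $D$ itself witnesses positive final charge. The only remaining subcase is $|S|=12$ with every $\mathcal{C}$-vertex of degree $2$ or $3$, where $\mu'(D)=0$ and the positive element must be located in the interior. By \autoref{LT}\ref{La} and \autoref{LT}\ref{Lc} the interior is nonempty, and inspecting the proofs of the preceding two lemmas exposes several candidate sources of strict positivity: any internal $k$-face with $k\in\{5,7,8\}$ satisfies $\mu'(f)\geq k-4-k/3>0$ (such a face either participates in no discharging, for $k=5$, or only feeds good vertices at rate $1/3$); any internal $5^+$-vertex satisfies $\mu'(v)>0$; by \eqref{EQ5} any $9^+$-face in $\mathcal{N}$ incident with a $2$-vertex has $\mu'(f)\geq (\deg(f)-8)/3>0$; and by \eqref{EQ4} any $7$- or $8$-face in $\mathcal{N}$ already gives $\mu'(f)>0$.

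The hard part will be verifying that the corner-case hypotheses force at least one of the above elements to occur. My plan is to argue by contradiction: suppose $\mu'(x)=0$ for every $x\neq D$. Then $G$ admits no internal $5$-, $7$-, or $8$-face, no internal $5^+$-vertex, no $9^+$-face in $\mathcal{N}$ incident with a $2$-vertex, and every bound established in the previous two lemmas must be attained with equality. Combining these rigid constraints with the normality of $\mathcal{C}$ (a $12$-cycle of $2$- and $3$-vertices) and the ban on $5^-$-cycles normally adjacent to $8^-$-cycles, I plan to trace a $2$-vertex of $\mathcal{C}$ through its incident face in $\mathcal{N}$ into the interior and invoke \autoref{3-8} together with \autoref{4F} to expose an excluded local configuration, producing the required contradiction and completing the proof of \autoref{MR}.
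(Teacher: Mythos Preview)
Your computation of $\mu'(D)=\frac{1}{3}(12-|S|+n_{\geq 4})$ and the disposal of the cases $|S|\leq 11$ or $n_{\geq 4}\geq 1$ are correct and match the paper. The problem is the remaining case $|S|=12$ with every $\mathcal{C}$-vertex of degree at most $3$: there you only sketch a plan, and the plan does not close.

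Your list of ``candidate sources of strict positivity'' is too coarse to force a contradiction. Even after ruling out internal $5$-, $7$-, $8$-faces, internal $5^{+}$-vertices, $9^{+}$-faces in $\mathcal{N}$ meeting a $2$-vertex, and $7$- or $8$-faces in $\mathcal{N}$, you still allow $3$-, $4$-, $5$-, and $6$-faces in $\mathcal{N}$, $9^{+}$-faces in $\mathcal{N}$ without $2$-vertices, internal $3$-, $4$-, $6$-, and $9^{+}$-faces, and internal $3$- and $4$-vertices. Your proposed endgame---tracing a $2$-vertex inward and invoking \autoref{3-8} and \autoref{4F}---does not clearly produce a contradiction from these remaining possibilities (indeed \autoref{4F} concerns $4$-faces and plays no role here).

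The paper's argument is sharper and uses the normality hypothesis on $\mathcal{C}$, which your plan never invokes. The paper looks directly at the faces $f$ adjacent to $D$: if any is a $9^{+}$-face then $\mu'(f)>0$ outright (no $2$-vertex assumption needed, since $f$ sends nothing to $3^{+}$-vertices on $\mathcal{C}$). Otherwise every face in $\mathcal{N}$ is an $8^{-}$-face; the ban on $5^{-}$-faces adjacent to $8^{-}$-faces then forces each to be a $6^{+}$-face with every incident internal $3$-vertex \emph{good}. A refined count gives $\mu'(f)\geq \frac{1}{3}(k-6)$ with equality only when $f$ is a $6$-face incident with three $2$-vertices. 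This rigidity forces exactly three $3$-vertices on $\mathcal{C}$, each with its interior edge meeting at a common internal vertex---precisely the abnormal configuration of \autoref{abnormal}, contradicting the hypothesis that $\mathcal{C}$ is normal. That final structural identification is the missing idea in your proposal.
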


By \ref{2V}--\ref{5PV}, the outer face $D$ sends $\frac{4}{3}$ to each incident $3^{-}$-vertex, and sends $1$ to each incident $4^{+}$-vertex, thus $\mu'(D) \geq |D| + 4 - \frac{4}{3}|D| = \frac{1}{3}(12 - |D|) \geq 0$. Therefore, every element in $V(G) \cup F(G)$ has a nonnegative final charge. In particular, $\mu'(D) = 0$ holds if and only if $|D| = 12$ and each vertex incident with $D$ receives $\frac{4}{3}$ from $D$. So we may assume that $|D| = 12$ and each vertex on the outer cycle $\mathcal{C}$ is a $3^{-}$-vertex.

Let $f$ be an arbitrary $k$-face adjacent to $D$. By the discharging rules, $f$ sends nothing to $3^{+}$-vertices on the outer cycle, and sends at most $\frac{2}{3}$ to each of the other incident vertex. If $k \geq 9$, then $\mu'(f) \geq k - 4 - (k - 2) \times \frac{2}{3} > 0$. Recall that every $6$-, $7$- and $8$-face adjacent to $D$ has a positive final charge, thus $D$ is not adjacent to any $6^{+}$-face. Therefore, there exists a $3$-vertex on the outer cycle such that it is incident with two adjacent $5^{-}$-faces, a contradiction. 
\end{proof}

\section{\textit{IF}-coloring}
\label{sec:3}
For \textit{IF}-coloring, a vertex is \textbf{colored with $I$} if the image is $I$ in a mapping, or it is in the part $I$ of an $(I, F)$-partition. An \textbf{$F$-path} is a path whose vertices are all colored with $F$, and an \textbf{$F$-cycle} is a cycle whose vertices are all colored with $F$. 

Given a graph $G$ and a cycle $C$ in $G$, an \textit{IF}-coloring $\phi_{C}$ of $G[V(C)]$ can \textbf{be superextended} to $G$ if there exists an \textit{IF}-coloring $\phi_{G}$ of $G$ that extends $\phi_{C}$ with the property that there are no $F$-paths (having at least one vertex not on $C$) linking two vertices of $C$. We say that $C$ is \textbf{superextendable} to $G$ if every \textit{IF}-coloring of $G[V(C)]$ can be superextended to $G$. For convenience, we also say that a vertex $w$ is \textbf{superextendable} to $G$ if every \textit{IF}-coloring of $w$ can be extended to an \textit{IF}-coloring of $G$.

For an \textit{IF}-coloring, a vertex $v$ is \textbf{$F$-reachable} to a cycle if there is a path from $v$ to a vertex on the cycle such that all the vertices on the path are colored with $F$. 

Instead of proving \autoref{IFMR}, we prove the following stronger result. 
\begin{restatable}{theorem}{MRIF}\label{MRIF}
Let $G$ be a graph in $\mathscr{G}$. Let $S$ be a set of vertices of $G$ such that $|S| \leq 1$ or $S$ consists of all vertices on a normal cycle of $G$. If $|S| \leq 12$, then every \textit{IF}-coloring $\phi$ of $G[S]$ can be superextended to an \textit{IF}-coloring of $G$. 
\end{restatable}

\begin{remark}
Analogously, not every \textit{IF}-coloring of the $11$- or $12$-cycle can be superextended to the whole graph. Thus, we require the condition that $S$ consists of all vertices on a ``normal'' cycle.
\end{remark}

The following result is a direct consequence of \autoref{MRIF}. 
\begin{theorem}\label{MRIF-}
Every graph in $\mathscr{G}$ is \textit{IF}-colorable. 
\end{theorem}

\begin{proof}[Sketch of a proof for \autoref{MRIF}]
Suppose that $G$ is a minimal counterexample to \autoref{MRIF}. That is, there exists an \textit{IF}-coloring of $G[S]$ that cannot be superextended to an \textit{IF}-coloring of $G$ such that $|V(G)| + |E(G)| - |S|$ is minimized. 

\begin{lemma}\label{IF3-8}
Every $8^{-}$-cycle has no chords. 
\end{lemma}

\begin{lemma}\label{IFLT}
\text{}
\begin{enumerate}[label = (\alph*)]
\item\label{IFLa} $S \neq V(G)$. 
\item\label{IFLb} $G$ is $2$-connected, and thus the boundary of every face is a cycle. 
\item\label{IFLc} Each vertex not in $S$ has degree at least three. 
\item\label{IFLd} Either $|S| = 1$ or $G[S]$ is an induced cycle of $G$. 
\item\label{IFLe} There are no separating normal $k$-cycles for $3 \leq k \leq 12$. Thus, every edge on an abnormal cycle is incident with a $4$-, $5$-, $6$- or $7$-face. 
\item\label{IFLf} $G[S]$ is an induced cycle of $G$. For convenience, we can redraw the graph $G$ such that $G[S]$ is the outer cycle $\mathcal{C}$ of $G$. Let $D$ be the outer face which is bounded by $G[S]$. 
\item\label{IFLg} Every $5$-face ($\neq$ outer face) is incident with at most one $2$-vertex. 
\end{enumerate}
\end{lemma}

\begin{proof}
The proof is similar to the one in \autoref{LT}, so the reader can do it as an exercise, or find it in  the arXiv version. 
\end{proof}

\begin{lemma}\label{IFADJACENTFACE}
There are no $3$-faces adjacent to $8^{-}$-faces, no $4$-faces adjacent to $6^{-}$-faces, and no adjacent $5$-faces. 
\end{lemma}

\begin{proof}
The proof is the same with that in \autoref{3-8}. 
\end{proof}

\begin{lemma}\label{IFTETRAD}
Let $w_{0}, w_{1}, w_{2}, w_{3}$ and $w_{4}$ be five consecutive vertices on a $5^{+}$-face. If $w_{1}, w_{2}, w_{3}$ and $w_{4}$ are all $3$-vertices, and $w_{1}w_{2}$ is incident with a $3$-face $ww_{1}w_{2}$, then at least one vertex in $\{w_{1}, w_{2}, w_{3}, w_{4}\}$ is on the outer cycle $\mathcal{C}$. 
\end{lemma}

\begin{proof}
Suppose to the contrary that none of $\{w_{1}, w_{2}, w_{3}, w_{4}\}$ is on the outer cycle $\mathcal{C}$. Let $w'$ be the neighbor of $w_{3}$ other than $w_{2}, w_{4}$, and let $G^{*} = G - \{w_{1}, w_{2}, w_{3}, w_{4}\}$. It is observed that $w_{0}, w_{1}, w_{2}, w_{3}, w_{4}, w$ and $w'$ are seven distinct vertices. Similar to \autoref{SIMITETRAD}, we can prove that the distance between $w_{0}$ and $w'$ is at least nine in $G^{*}$. 

Let $G'$ be the graph obtained from $G^{*}$ by identifying $w_{0}$ and $w'$. Since the distance between $w_{0}$ and $w'$ is at least nine in $G^{*}$, the graph $G'$ has no loop, no multiple edge and no new $8^{-}$-cycle, thus $G'$ is a simple plane graph satisfying the assumption of \autoref{MRIF}. Moreover, $\mathcal{C}$ is also a normal cycle of $G'$ and it has no chords in $G'$. This implies that $\phi$ is an \textit{IF}-coloring of $G'[S]$. Since $|V(G')| < |V(G)|$, the \textit{IF}-coloring $\phi$ of $G'[S]$ can be superextended to an \textit{IF}-coloring $\varphi$ of $G'$. We color $w_{0}$ and $w'$ with the same color as the new vertex in $G'$. If one neighbor of $w_{4}$ is colored with $I$, then we color $w_{4}$ with $F$; otherwise, we color $w_{4}$ with $I$. 

If $\varphi(w_{0}) = \varphi(w') = I$, then let $\varphi(w_{1}) = \varphi(w_{3}) = F$ and $\varphi(w_{2}) \neq \varphi(w)$. So we may assume that $\varphi(w_{0}) = \varphi(w') = F$, and let $\varphi(w_{3}) \neq \varphi(w_{4})$. If $\varphi(w) = I$, then let $\varphi(w_{1}) = \varphi(w_{2}) = F$. So we may assume that $\varphi(w) = F$. If $\varphi(w_{3}) = I$, then let $\varphi(w_{1}) = I$ and $\varphi(w_{2}) = F$. In the final, we may assume that $\varphi(w_{0}) = \varphi(w) = \varphi(w') = \varphi(w_{3}) = F$. It is observed that there are no $F$-paths in $H$ between $w'$ and $w_{0}$. Meanwhile, at least one of $w'$ and $w_{0}$ is not reachable to $\mathcal{C}$. When we color $w_{1}$ with $F$, it is a superextension to $G - w_{2}$, we can color $w_{2}$ with $I$. Otherwise, there is an $F$-path in $H$ between $w$ and $w_{0}$, or both $w$ and $w_{0}$ are reachable to $\mathcal{C}$. It follows that there are no $F$-paths between $w$ and $w'$, and at least one of $w$ and $w'$ is not reachable to $\mathcal{C}$. Then let $\varphi(w_{1}) = I$ and $\varphi(w_{2}) = F$. 

It is easy to check that the resulting coloring is always a superextension to $G$, a contradiction. 
\end{proof}

Let $w$ be a vertex on the outer cycle $\mathcal{C}$, and let $w_{1}, w_{2}, \dots, w_{k}$ be consecutive neighbors in a cyclic order. If $f$ is a face in $\mathcal{N}$ incident with $ww_{i}$ and $ww_{i+1}$, but neither $ww_{i}$ nor $ww_{i+1}$ is an edge of $\mathcal{C}$, then we call $f$ a \textbf{special} face (at $w$). A $4$-face is a \textbf{$4^{*}$-face} if it has three common vertices with $\mathcal{C}$. An internal $3$-vertex is \textbf{bad} if it is incident with a $3$-face which is not special, \textbf{light} if it is incident with an internal $4$-face or a $4^{*}$-face or a special $3$-face, \textbf{good} if it is neither bad nor light. According to \autoref{IFTETRAD}, we have the following result on bad vertices. 

\begin{lemma}\label{IFBAD}
There are no five consecutive bad vertices on the boundary of a $5^{+}$-face. 
\end{lemma}

\begin{lemma}\label{IF4F}
If a $4$-face in $\mathcal{N}$ has exactly two common vertices with $\mathcal{C}$, then these two vertices are consecutive on the $4$-face. 
\end{lemma}

\begin{proof}
Suppose that $f$ is a $4$-face in $\mathcal{N}$ that has exactly two common vertices with $\mathcal{C}$. If these two vertices are not consecutive on the $4$-face, then there exists a separating normal $8^{-}$-cycle, a contradiction. 
\end{proof}

Assume that $f = [v_{1}v_{2}\dots v_{l}]$ is an internal $(3, 3, 3, 3)$-face or an internal $(3, 3, 3, 3, 3)$-face. Let $u_{i}$ be the third neighbor of $v_{i}$ for $1 \leq i \leq l$. Note that every $8^{-}$-cycle has no chords, and there are no separating $4$- or $5$-cycles. It is observed that $\{u_{i}, v_{i}\mid 1 \leq i \leq l\}$ contains $2l$ distinct vertices. Let $\Gamma$ be the graph $G \setminus (V(f) \setminus \{v_{3}\})$, and let $G^{*}$ be the graph obtained from $\Gamma$ by identifying $u_{1}$ and $v_{3}$ into a new vertex $z$. 

\begin{lemma}
The graph $G^{*}$ is in the class $\mathscr{G}$, and $\mathcal{C}$ is an induced cycle of $G^{*}$. 
\end{lemma}
\begin{proof}
The proof is the same with that in \autoref{CLOSED}. 
\end{proof}

\begin{lemma}
There are no internal $(3, 3, 3, 3)$-faces or $(3, 3, 3, 3, 3)$-face. 
\end{lemma}
\begin{proof}
We know that $(G^{*}, \mathcal{C}, \phi)$ satisfies all the requirements in \autoref{MRIF}. By the minimality of $G$, the \textit{IF}-coloring $\phi$ can be superextended to an \textit{IF}-coloring $\varphi$ of $G^{*}$. We color $u_{1}$ and $v_{3}$ with the same color as the new vertex by the identification. 

$\bullet$ $\bm{l = 5}$. 

Assume that $\varphi(u_{1}) = \varphi(v_{3}) = I$. Then let $\varphi(v_{1}) = \varphi(v_{2}) = \varphi(v_{4}) = F$. Furthermore, we color $v_{5}$ such that $\varphi(v_{5}) \neq \varphi(u_{5})$, this is invalid only if $u_{2}v_{2}v_{1}v_{5}v_{4}u_{4}$ is on an $F$-cycle or is on an $F$-path linking two vertices of $\mathcal{C}$. In both cases, we recolor $v_{2}$ and $v_{4}$ with $I$, and $v_{3}$ with $F$. 

Assume that $\varphi(u_{1}) = \varphi(v_{3}) = F$. If $\varphi(u_{4}) = I$, then let $\varphi(v_{1}) = \varphi(v_{4}) = F$, $\varphi(v_{2}) \neq \varphi(u_{2})$ and $\varphi(v_{5}) \neq \varphi(u_{5})$, this is invalid only if $v_{1}, v_{2}, v_{3}, v_{4}, v_{5}$ are all colored with $F$. In this case, we recolor $v_{1}$ with $I$. So we may assume that $\varphi(u_{4}) = F$. If $\varphi(u_{2}) = I$ or $\varphi(u_{3}) = I$, then let $\varphi(v_{1}) = \varphi(v_{4}) = I$ and $\varphi(v_{2}) = \varphi(v_{5}) = F$. So we may assume that $\varphi(u_{1}) = \varphi(u_{2}) = \varphi(u_{3}) = \varphi(u_{4}) = F$. If $\varphi(u_{5}) = I$, then let $\varphi(v_{2}) = \varphi(v_{4}) = I$, and $\varphi(v_{1}) = \varphi(v_{5}) = F$. Now, we may assume that all the vertices $u_{i}$ are colored with $F$. Then let $\varphi(v_{3}) = \varphi(v_{5}) = I$ and $\varphi(v_{1}) = \varphi(v_{2}) = \varphi(v_{4}) = F$, this is invalid only if $u_{1}v_{1}v_{2}u_{2}$ is on an $F$-cycle or an $F$-path linking two vertices of $\mathcal{C}$. In this case, let $\varphi(v_{1}) = \varphi(v_{4}) = I$ and $\varphi(v_{2}) = \varphi(v_{3}) = \varphi(v_{5}) = F$. 

$\bullet$ $\bm{l = 4}$.

Assume that $\varphi(u_{1}) = \varphi(v_{3}) = I$. Then let $\varphi(v_{1}) = \varphi(v_{2}) = \varphi(v_{4}) = F$, this is invalid only if $u_{2}v_{2}v_{1}v_{4}u_{4}$ is on an $F$-cycle or is on an $F$-path linking two vertices of $\mathcal{C}$. In both cases, let $\varphi(v_{1}) = \varphi(v_{3}) = F$, $\varphi(v_{2}) = \varphi(v_{4}) = I$. 

Assume that $\varphi(u_{1}) = \varphi(v_{3}) = F$. Then let $\varphi(v_{1}) = F$, $\varphi(v_{2}) \neq \varphi(u_{2})$ and $\varphi(v_{4}) \neq \varphi(u_{4})$, this is invalid only if $\varphi(u_{2}) = \varphi(u_{4}) = I$. In this case, we recolor $v_{1}$ with $I$. 
\end{proof}
Note that \autoref{full} is only prepared for \autoref{SIMITETRAD}, so we do not need it here. All the structural lemmas are the same in the proof processes, so the discharging part is the same with that in \autoref{MRIF}. 
\end{proof}

\section{Final discussion}
\label{sec:4}
If we can relax the normally adjacent $5$-cycles in \autoref{MR}, then \autoref{MR} implies that every planar graph without $3$-, $6$-, $7$-cycles is DP-$3$-colorable. On the other hand, if we can allow the normally adjacent $4$-cycles in \autoref{MR}, then \autoref{MR} implies that every planar graph without $3$-, $7$-, $8$-cycles is DP-$3$-colorable. But until now, we don't know whether every planar graph without $3$-, $6$-, $7$-cycles is DP-$3$-colorable or not, and we don't know whether every planar graph without $3$-, $7$-, $8$-cycles is DP-$3$-colorable or not. It seems that it is not easy to solve these two problems. 

Dvo\v{r}\'{a}k, Lidick\'{y}, and \v{S}krekovski \cite{MR2680225} proved that every planar graph without $3$-, $6$-, $7$-cycles is $3$-choosable. The same authors \cite{MR2552620} also proved that every planar graph without $3$-, $7$-, $8$-cycles is $3$-choosable. So it is interesting to consider the following problems. 

\begin{problem}\label{Problem1}
Is every planar graph without $3$-, $6$-, $7$-cycles DP-$3$-colorable?
\end{problem}

\begin{problem}
Is every planar graph without $3$-, $7$-, $8$-cycles DP-$3$-colorable?
\end{problem}

\begin{problem}
Does every planar graph without $3$-, $6$-, $7$-cycles have an \textit{IF}-coloring?
\end{problem}

\begin{problem}
Does every planar graph without $3$-, $7$-, $8$-cycles have an \textit{IF}-coloring?
\end{problem}

\noindent\textbf{Added Note.} Recently, Han \etal \cite{Han2022} proved that every triangle-free planar graph without 4-cycles normally adjacent to 4- and 5-cycles is DP-$3$-colorable. This solves \autoref{Problem1}.

\vskip 0mm \vspace{0.3cm} \noindent\textbf{Acknowledgments.} This work was supported by the Fundamental Research Funds for Universities in Henan (YQPY20140051).

\appendix
\section*{Appendix}
\begin{proof}[Proof of \autoref{IFLT}]
\ref{IFLa} Suppose to the contrary that $S = V(G)$. Every \textit{IF}-coloring of $G[S]$ is an \textit{IF}-coloring of $G$, a contradiction. 

\ref{IFLb} It is observed that $G$ is connected. Suppose to the contrary that $G$ has a cut-vertex $w$. We may assume that $G = G_{1} \cup G_{2}$ and $G_{1} \cap G_{2} = \{w\}$. By the assumption of the set $S$, either $S \subseteq V(G_{1})$ or $S \subseteq V(G_{2})$. We may assume that $S \subseteq V(G_{1})$. By the minimality of $G$, the \textit{IF}-coloring $\phi$ of $G[S]$ can be superextended to an \textit{IF}-coloring $\phi_{1}$ of $G_{1}$, and $\phi_{1}(w)$ can be superextended to an \textit{IF}-coloring $\phi_{2}$ of $G_{2}$. These two colorings $\phi_{1}$ and $\phi_{2}$ together give an \textit{IF}-coloring of $G$ whose restriction on $G[S]$ is $\phi$, a contradiction. 

\ref{IFLc} Suppose that there exists a vertex $w$ not in $S$ having degree two. By the minimality of $G$, the \textit{IF}-coloring of $G[S]$ can be superextended to an \textit{IF}-coloring of $G - w$. If the two neighbors of $w$ are colored with $F$, then we color $w$ with $I$; otherwise, we color $w$ with $F$. 

\ref{IFLd} If $S = \emptyset$, then we put any vertex into $S$ to make $|S| = 1$. Suppose that $S = V(\mathcal{Q})$ and $\mathcal{Q}$ is a cycle with a chord $uv$. It is observed that the \textit{IF}-coloring of $G[S]$ is also an \textit{IF}-coloring of the induced subgraph in $G - uv$. By the minimality of $G$, the \textit{IF}-coloring $\phi$ of $G[S]$ can be superextended to an \textit{IF}-coloring of $G - uv$, and hence it is also a superextension of $G$, a contradiction. 

\ref{IFLe} We first show that $G[S]$ cannot be a separating cycle. Suppose to the contrary that $G[S]$ is a separating (normal) cycle $\mathcal{O}$. By the minimality of $G$, the \textit{IF}-coloring $\phi$ of $\mathcal{O}$ can be superextended to an \textit{IF}-coloring $\phi_{1}$ of $\Int(\mathcal{O})$, and another \textit{IF}-coloring $\phi_{2}$ of $\Ext(\mathcal{O})$. These two colorings $\phi_{1}$ and $\phi_{2}$ together give a superextension, a contradiction. 

Thus, either $|S| = 1$ or $S$ consists of all vertices on a face of $G$. Let $\mathcal{Q}$ be a separating normal $k$-cycle with $3 \leq k \leq 12$. Thus, we may assume that $S \subseteq \Ext(\mathcal{Q})$. By the minimality of $G$, the \textit{IF}-coloring $\phi$ of $G[S]$ can be superextended to an \textit{IF}-coloring $\varphi_{1}$ of $\Ext(\mathcal{Q})$. Similarly, the restriction of $\varphi_{1}$ on $\mathcal{Q}$ can be superextended to an \textit{IF}-coloring $\varphi_{2}$ of $\Int(\mathcal{Q})$. These two colorings $\varphi_{1}$ and $\varphi_{2}$ together give a superextension of $G$, a contradiction. 

\ref{IFLf} According to \ref{IFLd}, suppose to the contrary that $S = \{w\}$. We first assume that $w$ is on a $10^{-}$-cycle $\mathcal{Q}$. Without loss of generality, we may assume that $\mathcal{Q}$ is a shortest cycle containing $w$. Then $\mathcal{Q}$ is an induced cycle. By \ref{IFLe}, we may assume that $\ext(\mathcal{Q}) = \emptyset$ and $\mathcal{Q}$ is the outer cycle. By \ref{IFLc} and $\mathcal{Q}$ is an induced cycle, every vertex on $\mathcal{Q}$ other than $w$ has a neighbor in $\int(\mathcal{Q})$, which implies that $\int(\mathcal{Q}) \neq \emptyset$. By the minimality of $G$, the \textit{IF}-coloring $\phi$ of $\{w\}$ can be superextended to an \textit{IF}-coloring $\phi_{1}$ of $\mathcal{Q}$. By the minimality of $G$, the \textit{IF}-coloring $\phi_{1}$ of $\mathcal{Q}$ can be further superextended to an \textit{IF}-coloring of $G$, a contradiction. 

So we may assume that every cycle containing $w$ has length at least $11$. Let $w$ be incident with a face $[w_{1}ww_{2}\dots w_{1}]$. Let $G'$ be obtained from $G$ by adding a chord $w_{1}w_{2}$ in the face, let $S' = \{w, w_{1}, w_{2}\}$. We can easily check that $G'$ is a plane graph satisfying the assumption of \autoref{MRIF}. By the minimality of $G$, the \textit{IF}-coloring $\phi$ of $\{w\}$ can be superextended to an \textit{IF}-coloring $\phi_{1}$ of $G'[S']$. By the minimality of $G$, the \textit{IF}-coloring $\phi_{1}$ of $G'[S']$ can be further superextended to an \textit{IF}-coloring $\varphi$ of $G'$. It is observed that $\varphi$ is an \textit{IF}-coloring of $G$, a contradiction. 

\ref{IFLg} Note that every $2$-vertex and its two neighbors are all on the outer cycle. Suppose to the contrary that $f = [x_{1}x_{2}x_{3}x_{4}x_{5}]$ is a $5$-face which is incident with two $2$-vertices. Note that the two $2$-vertices must be adjacent on the outer cycle, say $x_{2}$ and $x_{3}$. It follows that $x_{1}$ and $x_{4}$ are on the outer cycle $\mathcal{C}$. If $x_{5}$ has three neighbors on $\mathcal{C}$, then $\mathcal{C}$ is abnormal (see \autoref{abnormala} and \autoref{abnormalb}), a contradiction. Thus, $x_{5}$ has a neighbor not on the outer cycle $\mathcal{C}$, and $\mathcal{C}'= (\mathcal{C} - \{x_{2}, x_{3}\}) \cup \{x_{1}x_{5}, x_{4}x_{5}\}$ is a separating $11^{-}$-cycle. By \autoref{IFLT}\ref{IFLe}, $\mathcal{C}'$ is an abnormal $11$-cycle (see \autoref{abnormala}). It follows that $\mathcal{C}$ is an abnormal $12$-cycle (see \autoref{abnormald}), a contradiction. 
\end{proof}

\end{document}